
\documentclass[11pt,leqno]{article}
\textwidth=125mm
\textheight= 185mm

\parindent = 8mm

\topmargin -0.5cm 
\frenchspacing
\usepackage{amsmath, amsfonts, theorem,color, 
}
\usepackage{latexsym}
\usepackage{graphicx}
\makeatletter
\def\@maketitle{\newpage
    \null
    \vskip .8truein
    \begin{center}%
     {\bf \@title \par}%
     \vskip 1.5em
     {\small
      \lineskip .5em
      \begin{tabular}[t]{c}\@author
      \end{tabular}\par}%
    \end{center}%
    \par
    \vskip .4truein}
\@addtoreset{equation}{section} \@addtoreset{theorem}{section}
\@addtoreset{lemma}{section} \@addtoreset{proposition}{section}
\@addtoreset{definition}{section} \@addtoreset{corollary}{section}
\@addtoreset{remark}{section}


\let\d=\delta
\let\D=\Delta

\let\nn=\nonumber

\newcommand{\re}{{\mathbb R}}

\let\ds=\displaystyle

\let\d=\delta


\newtheorem{theorem}{Theorem}[section]
\newtheorem{lemma}{Lemma}[section]
\newtheorem{proposition}{Proposition}[section]

\newtheorem{remark}{Remark}[section]

\newtheorem{claim}{Claim}[section]
  {\hfill$\Box$\bigskip\par}

\def\proof{\list{}{\setlength{\leftmargin}{0pt}
                      \parskip=0pt\parsep=0pt\listparindent=2em
                      \itemindent=0pt}\item[]\futurelet\testchar\@maybe}

\def\@maybe{\ifx[\testchar \let\next\@Opt
          \else \let\next\@NoOpt \fi \next}
\def\@Opt[#1]{{\it Proof of #1.\ }}\def\@NoOpt{{\it Proof.\ }}
\begin{document}
\title{\Large \bf Singular perturbations for a subelliptic operator}

\author{{\large \sc Paola Mannucci, Claudio Marchi, Nicoletta Tchou}\\
 \rm Universit\`a degli Studi di Padova, Universit\'e de Rennes 1}
\maketitle

\begin{abstract}
\noindent 
We study some classes of singular perturbation problems where the dynamics of the fast variables evolve in the whole space obeying to an infinitesimal operator which is subelliptic and ergodic. We prove that the corresponding ergodic problem admits a solution which is  globally Lipschitz continuous and it has at most a  logarithmic growth at infinity.

The main result of this paper establishes that as $\epsilon \rightarrow 0$, the value functions of the singular perturbation problems converge locally uniformly to the solution of an effective problem whose operator and data are explicitly given in terms of the invariant measure for the ergodic operator.
\end{abstract}
\noindent {\bf Keywords}:  Subelliptic equations, Heisenberg group, invariant measure, singular perturbations, viscosity solutions, degenerate elliptic equations.
\noindent {\bf Mathematics Subject Classification}: 35B25, 49L25, 35J70, 35H20, 35R03, 35B37, 93E20.
\footnote{\date{\today}}
%



\section{Introduction}
This paper is devoted to the asymptotic behaviour as $\epsilon \to 0$ of stochastic control systems of the form
\begin{eqnarray*}
dX_t&=&\tilde\phi (X_t,Y_t,u_t)dt+\sqrt2 \tilde \sigma(X_t,Y_t,u_t)dW_t,\qquad X_0=x\in \re^n\\
dY_t&=& \frac1\epsilon b(Y_t)dt+\frac{\sqrt2}{\sqrt\epsilon}\sigma(Y_t) dW_t,\qquad Y_0=y\in\re^m
\end{eqnarray*}
where $u_t$ is a control law, $W_t$ is a standard Brownian motion, while the coefficients $\tilde \phi$, $\tilde \sigma$, $b$ and $\sigma$ are Lipschitz continuous uniformly in $u$.
We are mostly interested in the asymptotic behaviour of the value function
\begin{equation*}
V^\epsilon(t,x,y):=\sup_{u \in{\cal U}} \mathbb E[\int_t^Tf(X_s,Y_s,u_s)ds+ e^{a(t-T)}g(X_T)]
\end{equation*}
where $\mathbb E$ denotes the expectation, $\cal U$ is the set of progressively measurable processes with values in a compact metric set ~$U$ and $a$ is a fixed positive parameter.
Our aim will be to characterize the limit of~$V^\epsilon$ as the solution to an {\it effective} Cauchy problem whose operator and terminal data need to be suitably chosen.
\\
Problems of this type arise from models where the variables $Y$ evolve much faster than the variables $X$.\\
%
%
%
Although the present work is not directly concerned with financial mathematics, it has been partially inspired by some models of financial assets whose price $X_t$ is a stochastic process with a possibly degenerate diffusion. In such models, the volatility $Y_t$ is also a stochastic process which is correlated to the former one. Some of the financial models discussed in (\cite{MR1357666},\cite {MR1768877},\cite{MR1830951} and the references therein) involve stochastic processes $X_t$ with degenerate diffusions.
On the other hand, the volatility $Y_t$ may also be a stochastic process with a degenerate diffusion, see for example the models of Feller  and  Cox-Ingersoll-Ross (\cite [pag.42]{MR1768877}).
\\
%
The main issue of this paper is to tackle this problem when the coefficients are not periodic in~$y$ and the diffusion matrices $\tilde \sigma \tilde\sigma^T$ and $\sigma \sigma^T$ may be degenerate and unbounded.
For the sake of simplicity, we shall focus our attention to the model case where $\tilde \sigma \tilde\sigma^T$ is bounded and the diffusion matrix~$\sigma$ is the one associated to the Heisenberg group in~$\re^3$
%
$$\sigma(y)=
\left(\begin{array}{cc}1 & 0 \\0 & 1 \\2y_2 & -2y_1\end{array}\right), \qquad \textrm{for }y=(y_1, y_2, y_3).$$
We note that $\sigma \sigma^T$ is degenerate and with unbounded coefficients.

It is well known that the ergodicity of the fast variable cannot be expected for general drift~$b$. In order to overcome this issue, we consider a drift $b$ in the Ornstein-Uhlenbeck form 
$$b(y)=-(k_1y_1, k_2y_2,k_3y_3) \qquad \textrm{for some }k_1>4, \,k_2>4, k_3>0$$
(see  A2)).
This choice of the drift is reminiscent of other similar conditions about recurrence of diffusion processes in the whole space (see for example \cite{G2} and references therein).

By standard theory (see~\cite{FS}), the value function $V^\epsilon$ is the unique (viscosity) solution to the following Cauchy problem for an Hamilton-Jacobi-Bellman equation
\begin{equation}\label{HJBSP}
\left\{\begin{array}{ll}
-\partial_tV^\epsilon+H\left(x,y,D_xV^\epsilon,D_{xx}^2V^\epsilon,\frac{D_{xy}^2V^\epsilon}{\sqrt\epsilon}\right)&\\
\qquad-\frac1\epsilon{\cal L}(y, D_yV^\epsilon,D_{yy}V^\epsilon)+a V^\epsilon=0 & \textrm{in }(0,T)\times \re^n\times\re^3\\
V^\epsilon(T,x,y)=g(x,y)& \textrm{on }\re^n\times\re^3
\end{array}\right.
\end{equation}
where 
\begin{eqnarray*}
H(x,y,p,X,Z)&:=&\min_{u\in U}\left\{-tr(\tilde \sigma\tilde \sigma^T X)-\tilde \phi\cdot p -2 tr(\tilde \sigma\sigma^T Z)-f(x,y,u) \right\}\\
{\cal L}(y,q,Y)&:=& tr(\sigma\sigma^T Y)+b\cdot q.
\end{eqnarray*}
For the sake of completeness, in order to exhibit the degeneracy and the unboundedness of the operator, we  write explicitly the second order term of ${\cal L}$:
\begin{equation}
\label{calL2nd}
tr(\sigma\sigma^T D^2 U)= U_{y_1y_1}+U_{y_2y_2}+4(y_1^2+y^2_2)U_{y_3y_3}+4y_2U_{y_1y_3}-
4y_1U_{y_2y_3}.
\end{equation}

We assume without any loss of generality that $a$ is strictly positive; actually, for $a
\leq 0$, the function $W^\epsilon (t,x,y)=e^{-A(T-t)}V^\epsilon(t,x,y)$, with $A>-a$, satisfies the same Cauchy problem but with a positive coefficient of the $0$-th order term.

Our aim is to establish that, as $\epsilon\to0^+$, the function~$V^\epsilon$ converges locally uniformly to a function~$V=V(t,x)$ (namely, independent of~$y$) which can be characterized as the unique (viscosity) solution to the effective Cauchy problem
\begin{equation}\label{EFFSP}
\left\{\begin{array}{ll}
-\partial_tV+\overline H\left(x,D_xV,D_{xx}^2V\right)+a V=0 &\quad \textrm{in }(0,T)\times \re^n\\
V(T,x)= \overline g(x)&\quad \textrm{on }\re^n
\end{array}\right.
\end{equation}
where, for every $(x,p,X)$, the effective Hamiltonian $\overline H(x,p,X)$ and the effective terminal datum are given by
\begin{eqnarray}\label{opeff}
\overline H(x,p,X)&:=& \int_{\re^3}H(x,y,p,X,0) d\mu(y)\\ \label{datoeff}
\overline g(x)&:=& \int_{\re^3}g(x,y) d\mu(y)
\end{eqnarray}
and $\mu$ is the invariant measure of the diffusion process with infinitesimal generator~$-\mathcal L$. As a matter of facts, $\overline H(x,p,X)$ is the ergodic constant~$\lambda$ of the cell problem
\begin{equation}\label{cell}
-tr(\sigma(y)\sigma^T(y)D^2 w(y))-b(y)Dw(y)+H(x,y,p,X,0)=\lambda\quad y\in\re^3,
\end{equation}
while $\overline g (x)$ is the constant obtained in the long time behaviour of the parabolic Cauchy problem
\begin{equation*}
\partial_t w'- {\cal L} w'=0\quad\textrm{in }(0,\infty)\times \re^3,\qquad w'(0,y)=g(x,y)\quad\textrm{on } \re^3.
\end{equation*}

There is a large literature on singular perturbation problems: see ~\cite{AB1,K,KP} and references therein.
We shall follow a pure PDE-approach. In this framework, the singular perturbation problems are strictly related to homogenization problems (see also \cite{MS}); Alvarez and Bardi \cite{AB1,AB3} extended to singular perturbation problems with {\it periodic} fast variables the celebrated {\it perturbed test function method} by Evans (see also \cite{ABM} for some cases in hypoelliptic periodic setting).
Let us also recall that, the papers~\cite{BC, BCM, G2} studied singular perturbation problems of uniformly elliptic operators on the whole space.

The novelties of our results is that the variable $Y_t$ is unbounded and the diffusion matrix of the fast variable may be degenerate and unbounded. In other words, the main issues to overcome are the lack of periodicity and the degeneracy of the operator. 
The proof of our main Theorem \ref{maintheor} is not an adaptation to the subelliptic case of some arguments already known in the non degenerate case. Indeed our proof is based on the perturbed test function method suitably adapted with a Lyapunov function. Moreover our techniques 
shed some light on some difficult points in the literature on the whole space.

Let us recall that existence and uniqueness of the ergodic constant~$\lambda$ for~\eqref{cell} (namely, that $\delta u_\delta$ locally converge to~$\lambda$, where $u_\delta$ solves the approximated cell problem \eqref{probcellappross0} below) and the stabilization to a constant have been established in our previous paper~\cite{MMT}. Unfortunately, by the lack of compactness for $y$, these properties seem to be not sufficient for applying the usual semilimits method for the convergence of~$V^\epsilon$. In order to overcome this issue:
\begin{itemize}
\item
 we shall prove that the cell problem admits a corrector~$w$ which is globally Lipschitz continuous and it has at most a logarithmic type growth at infinity;
\item under some additional assumptions we get that the corrector $w$ is $C^{2,\alpha}(\re^3)$.
\item we take advantage of the existence and uniqueness of the invariant measure and a superlinear Lyapunov function for the operator $\cal L$.
\end{itemize}
In our opinion, the proof of the global Lipschitz continuity of the corrector has its own interest because it can be extended to many other operators in unbounded domains.
In this direction, let us quote the papers ~\cite{G2} and \cite{LN} where similar results are obtained for strictly elliptic operators.\\
Moreover the $C^2$-regularity of the corrector is not straightforward because our operator
contains second order horizontal derivatives and Euclidean first derivatives as well and
such a second order part of the operator does not immediately regularize  the first order one.

The paper is organized as follows: in Section 2 we state the perturbation problem and our main convergence result.
Section 3 is devoted to the solution of the cell problem and its properties. 
In Section~\ref{sect:proof}, by means of these result we prove the convergence of $V^{\epsilon}$ to~$V$.

\section{The convergence result}\label{pbsp}

Throughout this paper unless otherwise explicitly stated, we assume
\begin{itemize}
\item[$A1$)] the diffusion matrix~$\sigma$ has the following form: 
\begin{equation*}
\sigma(y)=
\left(\begin{array}{cc}1 & 0 \\0 & 1 \\2y_2 & -2y_1\end{array}\right), \qquad \textrm{for }y=(y_1, y_2, y_3).
\end{equation*}
\item[$A2$)] the drift is $b(y)=-(k_1y_1, k_2y_2,k_3y_3)$ with $k_1>4$, $k_2>4$, $k_3>0$;
\item[$A3$)] the function~$f=f(x,y,u)$ is Lipschitz continuous in~$(x,y)$ uniformly in $u$ and, for some $C_f>0$, it satisfies
\begin{equation*}
|f(x,y,u)|\leq C_f(1+|x|)\qquad \forall (x,y,u)\in \re^n\times \re^3\times U;
\end{equation*}
\item[$A4$)] 
the function~$g$ is continuous in~$(x,y)$ and there exits $C_g$ such that
\begin{equation*}
|g(x,y)|\leq C_g(1+|x|) \qquad \forall (x,y)\in \re^n\times\re^3;
\end{equation*}
\item[$A5$)] $\tilde \phi(x,y,u)$ and $\tilde\sigma(x,y,u)$ are Lipschitz continuous and bounded in $(x,y)$ uniformly on $u$:
 $\vert\tilde \phi(x,y,u)\vert\leq C_{\tilde \phi}$,  $\vert\tilde \sigma(x,y,u)\vert\leq C_{\tilde\sigma}$;
 \item[$A6)$]
 for any $(x, p, X)$ the function 
 $F(y)=-H(x, y, p, X,0)$ is such that $F$,
 $\frac{\partial F}{\partial y_3}$ and $\frac{\partial^2 F}{\partial y_3^2} $ are bounded and globally Lipschitz.
\end{itemize}
\begin{remark}
\ 
{\rm
\begin{itemize}
\item Let us underline that condition (A2) is linked with the form of the second order operator given in \eqref{calL2nd}. This assumption will play a crucial role in several main points, for instance in \eqref{sublindelta}, Theorem \ref{lipschitzNuovo}, Theorem \ref{regw} and in \eqref{claimcontr1}.
\item We note also that there holds:
\begin{multline}\label{regH1}
\vert H(x,y,p, X,Z)-H(x',y,p', X',Z)\vert\\ \leq C\left(\vert p-p'\vert+\vert X-X'\vert\right)+C\vert x-x'\vert\left(1+\vert p \vert +\vert X\vert\right).
\end{multline}
\item We stress that assumption A6) will be only used for obtaining the regularity of the corrector.
\end{itemize}}
\end{remark}

We state now that the problem \eqref{HJBSP} is well posed and the solution $V^\epsilon$ has a sublinear growth in the slow variable.
\begin{proposition}\label{Existence}
Under Assumptions ($A1$)-($A5$), for any $\epsilon>0$ there exists a unique continuous viscosity solution~$V^\epsilon$ to problem \eqref{HJBSP} such that
\begin{equation}\label{usublinear}
|V^{\epsilon}(t,x,y)|\leq C_0(1+|x|), \quad \forall (t, x, y)\in (0,T)\times\re^n\times \re^n
\end{equation}
for some positive constant~$C_0$ independent on $\epsilon$.
In particular $\{V^{\epsilon}\}_\epsilon$ is a family of locally equibounded functions.
\end{proposition}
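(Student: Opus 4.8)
The plan is to invoke the standard viscosity-solution theory for Hamilton–Jacobi–Bellman equations associated to stochastic control problems with unbounded data, but one must be careful that the fast variable $y$ lives in the whole space and that $\sigma\sigma^T$ is degenerate and unbounded. First I would fix $\epsilon>0$ and rewrite \eqref{HJBSP} as a stationary-in-time problem on $(0,T)\times\re^n\times\re^3$ with the obvious parabolic structure; since $a>0$, the zeroth-order term has the right sign for a comparison principle. The strategy for existence is Perron's method: one constructs an explicit supersolution and subsolution of the form $\pm C_0(1+|x|)$ (possibly after adding a term handling the terminal datum and a time factor), checks that they are ordered and that they bracket the terminal condition thanks to (A4), and then the standard Ishii construction produces a (discontinuous) viscosity solution squeezed between them; a comparison principle then upgrades it to a continuous solution and simultaneously yields uniqueness together with the bound \eqref{usublinear}.

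The concrete steps, in order, are: (i) verify the structure conditions on $H$ and $\mathcal L$ — using (A3), (A5) and the explicit forms in (A1), (A2), together with estimate \eqref{regH1} — that are needed for a comparison principle for unbounded solutions with at most linear growth in $x$; this is where one chooses the right class of test functions/penalization. (ii) Build the barriers: seek $\Psi^{\pm}(t,x,y)=\pm\bigl(\Lambda(1+|x|^2)^{1/2}e^{\kappa(T-t)}\bigr)$ or simply $\pm C_0(1+|x|)$ after mollifying near $x=0$, and check using (A3), (A5) and the boundedness of $\tilde\sigma\tilde\sigma^T$ that, for $C_0$ large enough depending only on $C_f,C_g,C_{\tilde\phi},C_{\tilde\sigma},a,T$ and independent of $\epsilon$, these are respectively a super- and a subsolution; note that since $\Psi^\pm$ are independent of $y$, the term $\frac1\epsilon\mathcal L$ annihilates them, which is exactly why the bound is $\epsilon$-independent. (iii) Apply Perron's method between $\Psi^-$ and $\Psi^+$ to get a solution $V^\epsilon$ with $|V^\epsilon|\le C_0(1+|x|)$, hence \eqref{usublinear}. (iv) Invoke the comparison principle from step (i) to conclude continuity and uniqueness. (v) Observe that the bound \eqref{usublinear}, being uniform in $\epsilon$, gives the local equiboundedness of $\{V^\epsilon\}_\epsilon$. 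Most of this is classical and can be cited from \cite{FS}; the paper presumably only needs to point out that the degeneracy and unboundedness of $\sigma$ do not obstruct the argument because the barriers are $y$-independent.

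The main obstacle is the comparison principle in this unbounded, degenerate setting: the usual doubling-of-variables argument must cope with the quadratic growth of $\sigma\sigma^T$ in $y$ (see \eqref{calL2nd}) and with solutions that grow in $x$. The remedy is to exploit the Ornstein–Uhlenbeck drift in (A2): the function $1+|y|^2$ (or a suitable power of it) is a Lyapunov function for $-\mathcal L$ because $b\cdot D(|y|^2)=-2(k_1y_1^2+k_2y_2^2+k_3y_3^2)$ dominates the contribution $\mathrm{tr}(\sigma\sigma^T D^2(|y|^2))=2(2+4(y_1^2+y_2^2))$ precisely when $k_1,k_2>4$ — this is the role of the constraint $k_1,k_2>4$ flagged in the Remark. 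Adding a small multiple of such a Lyapunov function (and a term $(1+|x|^2)^{1/2}$ for the slow variable) as a penalization in the doubling argument controls the behaviour at infinity in both variables and closes the comparison; the details follow the scheme already developed for these operators in \cite{MMT} and for elliptic analogues in \cite{BC,BCM,G2}.
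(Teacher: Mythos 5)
Your proposal matches the paper's argument: construct $y$-independent barriers $\pm C_0(1+|x|)$ (smoothed near $x=0$) which the $\frac1\epsilon\mathcal L$ term annihilates, so the bound is $\epsilon$-independent; run Perron's method between them; and close with a comparison principle. The one place where you do substantially more work than necessary is the comparison principle itself, which you flag as the ``main obstacle'' and propose to establish by a bespoke doubling argument with a Lyapunov penalization in $y$. The paper instead observes that the hypotheses of Da Lio--Ley \cite{DLL} are already met --- the diffusion matrix $\sigma\sigma^T$ grows at most quadratically and the drift $b$ at most linearly in the state variable --- so comparison (hence uniqueness and continuity) is available off the shelf with no new argument. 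Your Lyapunov-function discussion is correct in spirit (it is essentially why the Da Lio--Ley hypotheses hold), but re-deriving comparison here is redundant; a citation suffices.
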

\begin{proof}
The uniqueness follows from the comparison principle proved in Da Lio-Ley \cite{DLL} (recall that they require that the diffusion matrix and the drift grow at most quadratically and respectively linearly with respect to the state).
We now claim that there exist a supersolution~$w^+$ and a subsolution~$w^-$ such that $|w^\pm(x)|\leq C(1+|x|)$ for $|x|$ sufficiently large. We shall prove the existence of~$w^+$ and we shall omit the analogous arguments for~$w^-$.\\
Let $w_0\in C^\infty(\re^n)$ be a function in~$x$ such that 
$$w_0=C_1(1+|x|)\quad \textrm{for }|x|\geq R\geq 1,\qquad w_0(x)\geq g(x,y)\quad \forall (x,y),$$
for some positive constants $C_1$ and $R$.
For $|x|\geq R$, there holds
\begin{multline*}
-\partial_t w_0+H(x,y,D_xw_0,D_{xx}^2w_0,\frac{D_{xy}^2w_0}{\sqrt\epsilon})-\frac1\epsilon{\cal L}(y, D_yw_0,D_{yy}w_0)+a w_0=\\
H(x, y,Dw_0, D^2w_0,0)+a w_0=\\
\min_{u\in U}\left\{-tr(\tilde \sigma\tilde \sigma^T D^2w_0)-\tilde \phi\cdot Dw_0-f(x,y) \right\}+aw_0(x)\geq\\
-C_2 C_1+aC_1(1+|x|)-C_f(1+|x|)
\end{multline*}
where  $C_2$ depends on $C_{\tilde \phi}$ and $C_{\tilde\sigma}$. Choosing $C_1$ big enough (for instance $C_1\geq \frac{C_f+1}{a}$) and $R$ big enough (for instance $R\geq C_2 C_1$) the function~$w_0$ is a supersolution \eqref{HJBSP} for $|x|>R$. Eventually adding a new positive constant $C_3$ big enough,
$w^+=w_0+C_3$ is a supersolution in the whole~$(0,T)\times \re^n\times\re^3$ which amounts to our claim.
In conclusion, applying Perron's method, we infer the existence of a solution to~\eqref{HJBSP} verifying \eqref{usublinear}.
\end{proof}

The main purpose of this paper is to prove the following
\begin{theorem}
\label{maintheor}
Under assumptions A1)-A6),  the solution
$V^\epsilon$ of \eqref{HJBSP} converges uniformly on the compact subsets of $(0,T)\times \re^n\times\re^3$
to the unique viscosity solution $V$ of \eqref{EFFSP} where $\overline H$ and $\overline g$ are defined in \eqref{opeff} and respectively in \eqref{datoeff}.
\end{theorem}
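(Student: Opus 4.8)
The plan is to use the perturbed test function method of Evans--Alvarez--Bardi, adapted to the noncompact, degenerate setting by means of the corrector $w$ (globally Lipschitz with logarithmic growth) from Section~3, together with the Lyapunov function for $\mathcal L$ and the invariant measure $\mu$. First I would introduce the half-relaxed semilimits
\[
\overline V(t,x,y):=\limsup_{\epsilon\to0,\,(t',x',y')\to(t,x,y)}V^\epsilon(t',x',y'),\qquad
\underline V(t,x,y):=\liminf_{\epsilon\to0,\,(t',x',y')\to(t,x,y)}V^\epsilon(t',x',y'),
\]
which are well defined and satisfy $|\overline V|,|\underline V|\le C_0(1+|x|)$ thanks to Proposition~\ref{Existence}. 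The first key step is to show that $\overline V$ and $\underline V$ do not depend on $y$: here one tests $V^\epsilon$ against $\psi(t,x)\pm\eta\chi(y)$ where $\chi$ is the superlinear Lyapunov function; the term $-\frac1\epsilon\mathcal L(y,D_y\chi,D^2_{yy}\chi)$ blows up with a favourable sign off a compact set, forcing the maximum/minimum of $V^\epsilon-\psi$ to localize in $y$ and, letting $\epsilon\to0$, yielding $y$-independence. This is the step where the noncompactness of the fast variable is genuinely felt, and it is the main obstacle; the Lyapunov estimates from Section~3 are exactly what make it work.

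Next I would prove that $\overline V$ is a viscosity subsolution and $\underline V$ a viscosity supersolution of the effective problem \eqref{EFFSP} in $(0,T)\times\re^n$. Fix a smooth $\phi=\phi(t,x)$ touching $\overline V$ from above at $(t_0,x_0)$, and set $p=D_x\phi(t_0,x_0)$, $X=D^2_{xx}\phi(t_0,x_0)$. Using the corrector $w=w(y)$ solving the cell problem \eqref{cell} with right-hand side $H(x_0,y,p,X,0)$ and ergodic constant $\overline H(x_0,p,X)$, form the perturbed test function
\[
\phi^\epsilon(t,x,y):=\phi(t,x)+\epsilon\, w(y).
\]
Because $w$ has at most logarithmic growth and is globally Lipschitz, the products $\sqrt\epsilon D_{xy}$ and $\epsilon D_{yy}$ terms coming from $w$ are controlled, while $\frac1\epsilon\mathcal L(y,\epsilon D_yw,\epsilon D^2_{yy}w)=\mathcal L(y,D_yw,D^2_{yy}w)$ combines with $H(x_0,y,p,X,0)$ to produce exactly $\overline H(x_0,p,X)$ via \eqref{cell}. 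One then argues at a maximum point of $V^\epsilon-\phi^\epsilon$ — which exists after the usual localization with the Lyapunov penalization, since $\phi^\epsilon$ grows like $\epsilon\log|y|$ and $V^\epsilon$ is sublinear in $x$ uniformly in $y$ — and passes to the limit $\epsilon\to0$, using \eqref{regH1} and the $y$-independence of $\overline V$ to replace $x$ by $x_0$ and absorb error terms. The supersolution property for $\underline V$ is symmetric. If the corrector's $C^{2,\alpha}$ regularity (available under A6)) is needed to make sense of the equation classically at the contact point, one invokes it here; otherwise a standard doubling-of-variables argument in $y$ suffices.

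Finally I would match the initial (terminal) condition: one shows $\overline V(T,x)\le\overline g(x)\le\underline V(T,x)$ by building barriers from the long-time behaviour of the parabolic problem $\partial_t w'-\mathcal L w'=0$ with datum $g(x,\cdot)$, whose solution converges to $\overline g(x)=\int g(x,y)\,d\mu(y)$; concretely one uses $\phi(t,x)\pm C(T-t)\pm\epsilon w'\!\big(\tfrac{T-t}{\epsilon},y\big)$ as test functions near $t=T$. With $\overline V$ a subsolution, $\underline V$ a supersolution, $\overline V\le\underline V$ trivially reversed by definition ($\underline V\le\overline V$ always), and both squeezed at $t=T$, the comparison principle for \eqref{EFFSP} — valid since $\overline H$ inherits the structure conditions, cf.\ \eqref{regH1}, and the data are sublinear — gives $\overline V\le\underline V$ on all of $(0,T)\times\re^n$, hence $\overline V=\underline V=:V$ is the unique viscosity solution of \eqref{EFFSP}. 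Equality of the semilimits plus local equiboundedness upgrades the convergence of $V^\epsilon$ to locally uniform convergence, which is the assertion of Theorem~\ref{maintheor}. The delicate points throughout are the two places where compactness in $y$ is missing — the $y$-independence of the semilimits and the existence of maximizers for $V^\epsilon-\phi^\epsilon$ — both handled by the superlinear Lyapunov function for $\mathcal L$ together with the logarithmic growth bound on $w$.
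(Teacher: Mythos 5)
Your overall plan matches the paper's structure (half-relaxed semilimits; $y$-independence; perturbed test function; terminal-layer barriers; comparison), and Steps~2--4 of your outline are essentially the route the paper follows. In Step~2 the paper's test function is $\psi_{\epsilon\eta}(t,x,y)=\psi(t,x)+\epsilon\bigl(w(y)+\eta\chi(y)\bigr)$: the superlinear $\eta\chi$ is built directly into the test function so that the maximizers $y_{\epsilon,\eta}$ stay bounded \emph{uniformly in} $\epsilon$ (this is the crux of Claim~\ref{claimybdd}); with only $\phi+\epsilon w$ the $\epsilon\log|y|$ growth ensures a maximizer for each fixed $\epsilon$ but gives no uniform control as $\epsilon\to0$. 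You mention the Lyapunov penalization in words, but keeping it explicitly in the test function, and sending $\eta\to0$ \emph{after} $\epsilon\to0$ (Claim~\ref{claimend}), is what makes the argument close. Also, the paper does rely on the $C^{2,\alpha}$ regularity of $w$ from Theorem~\ref{regw} (under A6) so that $\psi_{\epsilon\eta}$ is an admissible $C^2$ test function; the alternative ``doubling in $y$'' you suggest as a fallback is not carried out in the paper and is not obviously routine here. A small slip: the terminal barrier should be $w^r\!\bigl(\tfrac{T-t}{\epsilon},y\bigr)$, not $\epsilon\,w'\!\bigl(\tfrac{T-t}{\epsilon},y\bigr)$; the rescaled time derivative $\partial_t$ already produces the needed $1/\epsilon$ factor, so the extra $\epsilon$ would kill the cancellation with $\frac1\epsilon\mathcal L$.

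The genuine gap is in your Step~1. Localizing the maximum of $V^\epsilon-\psi-\eta\chi$ by Lyapunov coercivity tells you where the maximizer sits, but it does not show that $\overline V$ (or $\underline V$) is independent of $y$. The paper's Lemma~\ref{Vindy} instead proves that, for fixed $(t_0,x_0)$, the bounded function $y\mapsto\overline V(t_0,x_0,y)$ is a viscosity subsolution of the ergodic equation $-\mathrm{tr}(\sigma\sigma^T D_{yy}V)-b\cdot D_y V=0$ on $\re^3$ (by the usual argument of multiplying the HJB equation by $\epsilon$ and passing to the half-relaxed limit), and then invokes a Liouville theorem for this degenerate operator from \cite{MMT}. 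That Liouville result is not a consequence of Lyapunov coercivity alone; it encodes the recurrence of the process and is exactly what replaces the compactness of the torus in the periodic theory. Without it (or an equivalent statement), your sketch of $y$-independence does not close, and the rest of the proof cannot begin.
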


\section{The cell problem}
In this section we prove that there exists an unique constant $\lambda$ such that the cell problem~\eqref{cell} admits solutions. We shall also prove the existence of a solution~$w$ which is globally Lipschitz continuous  and with $\log$-growth at infinity. 
Assuming also A6) we prove that $w\in C^2$.
This solution~$w$ will play a crucial role in the proof of Theorem~\ref{maintheor}.
\subsection{Approximated cell problems}
In order to solve the cell problem \eqref{cell}, it is expedient to introduce the approximated problems
\begin{equation}\label{probcellappross0}
\delta u_{\delta}-tr(\sigma(y)\sigma^T(y)D^2 u_{\delta})-b(y)Du_{\delta}=F(y)\qquad \textrm{in }\re^3,
\end{equation}
where  $\delta>0$ and $F(y)=-H(x,y,p,X,0)$ with $(x,p,X)$ fixed.
In this section the results are obtained for a general function $F(y)$ which satisfies:
\begin{equation}\label{F}
F(y) \mbox{ is continuous and bounded in } \re^3. 
\end{equation}
Note that under assumptions (A1)-(A5), for $(x, p, X)$ fixed, the function $F(\cdot)=-H(x,\cdot,p,X,0)$, satisfies assumption~\eqref{F}.

Let us recall from \cite{MMT} some properties of the operator $\cal L$ and functions $u_\delta$ (and 
we refer the reader to this paper for the detailed proof).
\begin{lemma}
\label{eqmu}
There exists a unique invariant measure $\mu$ associated to the operator $-\mathcal L$; moreover
\[
{\cal L}^*\mu=0, \quad \mu>0,\quad \mu\in C^\infty(\re^3)
\]
where ${\cal L}^*$ is the adjoint operator of $-\cal L$.
\end{lemma}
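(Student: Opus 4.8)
The plan is to derive all four assertions from the H\"ormander structure of $\mathcal L$ together with a single quadratic Lyapunov function, whose admissibility is precisely what assumption (A2) guarantees.

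\emph{Step 1 (H\"ormander structure and hypoellipticity).} Set $X_1:=\partial_{y_1}+2y_2\partial_{y_3}$ and $X_2:=\partial_{y_2}-2y_1\partial_{y_3}$. A direct computation from \eqref{calL2nd} shows $tr(\sigma\sigma^TD^2U)=X_1^2U+X_2^2U$, so that $\mathcal L=X_1^2+X_2^2+X_0$ with $X_0:=-(k_1y_1\partial_{y_1}+k_2y_2\partial_{y_2}+k_3y_3\partial_{y_3})$. Since $[X_1,X_2]=-4\partial_{y_3}$, the fields $X_1,X_2,[X_1,X_2]$ are linearly independent at every point, hence H\"ormander's condition holds for $\mathcal L$ and, since the $X_i$ are divergence free, the formal adjoint $\mathcal L^*$ has the same second order part $X_1^2+X_2^2$ and satisfies H\"ormander's condition as well. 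Consequently both operators are hypoelliptic; in particular any distribution $\mu$ solving $\mathcal L^*\mu=0$ is automatically a $C^\infty$ function. Moreover, by the support theorem and the bracket-generating property, the diffusion with generator $\mathcal L$ admits a smooth, strictly positive transition density, so its semigroup is strong Feller and irreducible.

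\emph{Step 2 (existence and the equation $\mathcal L^*\mu=0$).} Take the smooth function $\Phi(y):=|y|^2$. Using \eqref{calL2nd} and (A2),
$$\mathcal L\Phi=4-(2k_1-8)y_1^2-(2k_2-8)y_2^2-2k_3y_3^2\le 4-\beta\Phi,\qquad \beta:=\min\{2k_1-8,\,2k_2-8,\,2k_3\}>0.$$
This is the only point where $k_1,k_2>4$ is used: the restoring force of the Ornstein--Uhlenbeck drift must dominate the unbounded coefficient $4(y_1^2+y_2^2)$ multiplying $U_{y_3y_3}$. Since $\mathcal L\Phi\le C(1+\Phi)$ the diffusion is non-explosive, and since $\mathcal L\Phi\to-\infty$ at infinity, Dynkin's formula applied to $\Phi$ yields $\sup_{t\ge1}\frac1t\int_0^t\mathbb E_y\big[|Y_s|^2\big]\,ds<\infty$. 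Hence the family of time-averaged occupation measures is tight, and by the Krylov--Bogolyubov argument any weak limit point $\mu$ is an invariant probability measure; testing invariance against $\psi\in C_c^\infty(\re^3)$ gives $\int\mathcal L\psi\,d\mu=0$, i.e.\ $\mathcal L^*\mu=0$ in the sense of distributions. By Step 1, $\mu\in C^\infty(\re^3)$.

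\emph{Step 3 (positivity and uniqueness).} The density $\mu$ is nonnegative, smooth, not identically zero and satisfies $\mathcal L^*\mu=0$; if it vanished at some point it would attain its minimum there, and the strong minimum principle of Bony for H\"ormander operators (equivalently, the identity $\mu=\int p(t,z,\cdot)\,\mu(dz)$ together with $p(t,z,\cdot)>0$ from Step 1) would force $\mu\equiv0$, contradicting $\int\mu=1$; hence $\mu>0$ on $\re^3$. Finally, uniqueness follows from Doob's theorem, since a semigroup which is simultaneously strong Feller and irreducible admits at most one invariant probability measure. I expect the main obstacle to be the coexistence of the two degeneracies of $\mathcal L$: the subellipticity forbids classical elliptic regularity and the classical strong maximum principle, forcing the use of H\"ormander's theorem and Bony's principle, while the unboundedness of $\sigma\sigma^T$ and $b$ rules out compactness and makes the Lyapunov estimate of Step 2 (hence the thresholds $k_1,k_2>4$) indispensable. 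The detailed argument is carried out in \cite{MMT}.
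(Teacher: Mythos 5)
Your argument is correct in substance, but note that the paper does not prove this lemma at all: it is quoted verbatim from the companion paper \cite{MMT}, so the comparison is really with the strategy used there, which your proposal reconstructs faithfully in outline (H\"ormander hypoellipticity of $\mathcal L$ and $\mathcal L^*$, a Lyapunov function, Krylov--Bogolyubov, and a strong maximum principle of Bony type for positivity and uniqueness). The one substantive difference is the choice of Lyapunov function. You take $\Phi(y)=|y|^2$, for which $\mathcal L\Phi=4-(2k_1-8)y_1^2-(2k_2-8)y_2^2-2k_3y_3^2$, so that dissipativity genuinely requires $k_1,k_2>4$; the Remark following the lemma shows that \cite{MMT} instead uses $w(y)=(y_1^4+y_2^4)/12+y_3^2/2$, for which $\mathcal L w=5(y_1^2+y_2^2)-k_1y_1^4/3-k_2y_2^4/3-k_3y_3^2$, which is coercive for \emph{any} $k_1,k_2,k_3>0$ and moreover yields the stronger moment bound $\int(y_1^4+y_2^4+y_3^2)\,d\mu<\infty$ that the paper needs later. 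So your claim that the thresholds $k_1,k_2>4$ are ``indispensable'' for this lemma is inaccurate: they are needed for the global Lipschitz estimate of Theorem \ref{lipschitzNuovo}, not for the invariant measure, and the quartic Lyapunov function is the better choice here. A second, minor point: you assert that the transition density is ``strictly positive'' as a consequence of the support theorem, but the support theorem only gives full support of the law; pointwise positivity of the density requires an additional argument. This does not damage your proof, since Doob's theorem only needs topological irreducibility of the transition probabilities (which the support theorem plus Chow--Rashevskii controllability does give) together with the strong Feller property, and positivity of $\mu$ is cleanly obtained from Bony's propagation principle applied to the bracket-generating pair $X_1,X_2$, exactly as in your parenthetical alternative. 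With these two adjustments the proof stands.
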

\begin{remark}
As a byproduct of \cite{MMT}, we have the following estimate on the decay of $\mu$ at infinity:
\[\int_{\re^3} (y_1^4+y_2^4+y_3^2)d\mu(y)<+\infty.\]
Actually, by \cite[Prop. 2.1 (proof)]{MMT}, the function $w(y):=(y_1^4+y_2^4)/12+y_3^2/2$ satisfies \cite[eq. (2.10)]{MMT} with $\phi>k(y_1^4+y_2^4+y_3^2)$ (for $k>0$ sufficiently small). Hence, for this choice of $\phi$, relations \cite[equation (2.14)]{MMT} and \cite[equation (2.18)]{MMT} hold true. Letting $\rho\to 0^+$ we get: $\int_{\re^3} \phi d\mu<+\infty$.
\end{remark}
\begin{lemma}\label{exudelta0}
Under assumptions (A1),(A2),\eqref{F}, there exists an unique smooth solution $u_{\delta}$ of the approximating problem 
(\ref{probcellappross0}) such that
\begin{equation*}
|u_{\delta}(y)|\leq \frac C\delta \qquad \forall y\in \re^3
\end{equation*}
for some positive constant~$C$ independent of $\delta$. Moreover
the functions $\delta u_{\delta}$ are locally uniformly H\"older continuous, i.e. there exists $\alpha\in(0,1)$ such that for every compact $K\subset \re^3$ there exists a constant $N$ such that 
\begin{equation*}
|\delta u_{\delta}(y_1)-\delta u_{\delta}(y_2)|\leq N|y_1-y_2|^{\alpha}, \quad \forall y_1,y_2\in K,\ \forall \delta\in(0,1).
\end{equation*}
The constant $N$ only depends on $K$ and on the data of the problem (in particular is independent of $\delta$).
\end{lemma}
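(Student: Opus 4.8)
The plan is to obtain $u_\delta$ as the value function of a discounted stochastic control problem associated with the generator $-\mathcal L$, and then to exploit the Ornstein--Uhlenbeck structure of the drift $b$ together with the explicit form \eqref{calL2nd} of the second-order part to get the uniform bounds. First I would write down the (linear) diffusion $dY_t = b(Y_t)\,dt + \sqrt2\,\sigma(Y_t)\,dW_t$ with generator $\mathcal L$ and consider the candidate
\[
u_\delta(y) = \mathbb E_y\!\left[\int_0^\infty e^{-\delta t} F(Y_t)\,dt\right].
\]
Since $F$ is bounded by \eqref{F}, this is immediately finite with $|u_\delta(y)|\le \|F\|_\infty/\delta$, which is the first assertion. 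Standard stochastic-control / PDE theory (the comparison principle of Da Lio--Ley \cite{DLL} applies here because $\sigma\sigma^T$ grows quadratically and $b$ linearly) gives that $u_\delta$ is the unique viscosity solution of \eqref{probcellappross0} in the class of functions with polynomial growth; uniqueness among bounded (or sublinear) solutions follows from comparison since any two solutions differ by a bounded function $v$ solving $\delta v - \mathcal L v = 0$, forcing $v\equiv 0$ by the maximum principle together with the superlinear Lyapunov function $(y_1^4+y_2^4)/12 + y_3^2/2$ recalled in the Remark after Lemma \ref{eqmu}. Smoothness of $u_\delta$ is then a matter of interior regularity: although $\sigma\sigma^T$ is degenerate, the operator $\mathcal L$ is hypoelliptic (Hörmander's condition holds for the Heisenberg vector fields together with $\partial_{y_3}$ coming from the drift), so $u_\delta \in C^\infty(\re^3)$.

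The substantive part is the local uniform Hölder estimate on $\delta u_\delta$, independent of $\delta\in(0,1)$. Here I would argue as follows. From the bound $|u_\delta|\le C/\delta$ one has $|\delta u_\delta|\le C$, so the family $\{\delta u_\delta\}$ is uniformly bounded; the equation satisfied by $v_\delta := \delta u_\delta$ is
\[
\delta v_\delta - \mathcal L v_\delta = \delta\big(F - v_\delta\big) \quad\text{in }\re^3,
\]
whose right-hand side is bounded uniformly in $\delta$ (by $2C + \|F\|_\infty$, say, after adjusting constants). On any ball $B_{2R}$ one can now invoke an interior Hölder estimate for subelliptic/hypoelliptic operators of the form $\mathcal L u = h$ with $h\in L^\infty$ — for the Heisenberg-type operator at hand this is the Krylov--Safonov-type estimate in the Carnot--Carathéodory metric, or alternatively a De Giorgi--Nash--Moser argument adapted to the sub-Laplacian plus drift — giving $\|v_\delta\|_{C^\alpha(B_R)} \le N\big(\|v_\delta\|_{L^\infty(B_{2R})} + \|h\|_{L^\infty(B_{2R})}\big)$ with $N,\alpha$ depending only on $R$ and the data. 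Since both norms on the right are controlled uniformly in $\delta$, the claim follows; note that Hölder continuity in the Carnot--Carathéodory metric yields Hölder continuity (with the same or a related exponent) in the Euclidean metric on compact sets, because the two metrics are locally Hölder-equivalent, so the statement as written in the lemma is obtained.

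The main obstacle is precisely this last interior regularity step: because $\mathcal L$ mixes the horizontal Heisenberg derivatives with the Euclidean first-order drift term $b\cdot Du$, one cannot cite a verbatim off-the-shelf estimate for the pure sub-Laplacian. The cleanest route is to treat $b\cdot Du_\delta$ as a lower-order perturbation — on a fixed compact $K$ the coefficient $b$ is bounded and Lipschitz, so locally $\mathcal L$ is a uniformly subelliptic operator with bounded drift, for which Hölder estimates are available (e.g. via the results on Hörmander operators with drift, or by a freezing-of-coefficients plus Harnack-inequality argument). One must be careful that the constant $N$ depends only on $\sup_K |b|$, the structure constants of $\sigma$, and $\|h\|_{L^\infty}$ — all of which are $\delta$-independent — and not on any quantity that degenerates as $\delta\to 0$; this is guaranteed because $\|h\|_{L^\infty(B_{2R})} = \delta\|F - v_\delta\|_{L^\infty(B_{2R})}$ is in fact bounded by $\delta$ times a constant, hence $\le$ a constant, uniformly. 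With that observation the uniform Hölder bound, and therefore the lemma, follows; the detailed verification of these estimates is carried out in \cite{MMT}, to which we refer.
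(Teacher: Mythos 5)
The paper does not prove Lemma \ref{exudelta0} at all --- it states it as one of several properties ``recalled from \cite{MMT}'' and points the reader there for the details. So there is no internal proof to compare against; your sketch has to be judged on its own merits, and also against the fact that you, too, ultimately defer to the same reference.

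Your probabilistic route to the $L^\infty$ bound (Feynman--Kac representation $u_\delta(y)=\mathbb E_y[\int_0^\infty e^{-\delta t}F(Y_t)\,dt]$) is a perfectly valid alternative to the purely PDE route, which would simply compare with the constant super/subsolutions $\pm\|F\|_\infty/\delta$; in fact the paper uses exactly the latter observation in the proof of the very next lemma (``a trivial supersolution is $C_F/\delta$''). Your uniqueness argument via a superlinear Lyapunov function and your appeal to H\"ormander hypoellipticity for smoothness are both standard and correct.

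One algebraic slip: if $v_\delta:=\delta u_\delta$ then multiplying \eqref{probcellappross0} by $\delta$ gives $\delta v_\delta-\mathcal L v_\delta=\delta F$, not $\delta(F-v_\delta)$. The slip is harmless for what you want (the right-hand side is still $O(\delta)$, hence bounded uniformly in $\delta\in(0,1)$), and in fact the corrected form is cleaner.

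The genuine gap is precisely where you flag it: the passage from ``$v_\delta$ is uniformly bounded and $\mathcal L v_\delta$ is uniformly bounded'' to ``$v_\delta$ is uniformly H\"older on compacts'' requires an interior H\"older estimate with a constant controlled only by local data. This is not an off-the-shelf Krylov--Safonov or De~Giorgi--Nash--Moser statement, for two reasons: (i) the second-order part $X_1^2+X_2^2$ is degenerate, and (ii) the drift $b\cdot\nabla$ contains the component $-k_3y_3\partial_{y_3}$, which is \emph{not} a horizontal vector field, so it cannot be absorbed as a bounded lower-order term in the Carnot--Carath\'eodory framework. Hypoellipticity alone gives qualitative smoothness of each fixed $u_\delta$ but does not, by itself, give a $\delta$-uniform modulus. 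You are right that the needed quantitative estimate is exactly what \cite{MMT} establishes; but then your ``proof'' has the same status as the paper's one-line citation, and the hand-waving in the middle (``Krylov--Safonov-type estimate in the CC metric, or alternatively a De Giorgi--Nash--Moser argument adapted to the sub-Laplacian plus drift'') should not be presented as if it were a routine step. If you want a self-contained argument, you would need to reproduce whatever quantitative Harnack/interior estimate \cite{MMT} actually proves for this specific operator, taking explicit account of the non-horizontal drift.

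In short: a plausible and essentially sound outline that makes the same deferral to \cite{MMT} that the paper does, with one minor algebraic error and one step (the uniform interior H\"older bound) correctly identified as the crux but not actually established.
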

\begin{theorem}\label{conv1}
The solution $u_{\delta}$ of  problem (\ref{probcellappross0}) given in Lemma~\ref{exudelta0} satisfies
\begin{equation*}
\lim_{\delta\rightarrow 0}\delta u_{\delta}= \int_{\re^3}F(y)d\mu(y),
\end{equation*}
where $\mu$ is the invariant measure of $-{\cal L}$ established in Lemma~\ref{eqmu}.
\end{theorem}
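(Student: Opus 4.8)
The plan is to identify $\lim_{\delta\to 0}\delta u_\delta$ by a duality argument against the invariant measure $\mu$, combined with the compactness information on $\{\delta u_\delta\}$ already recorded in Lemma~\ref{exudelta0}. First I would observe that, by Lemma~\ref{exudelta0}, the family $\{\delta u_\delta\}_{\delta\in(0,1)}$ is locally bounded and locally uniformly H\"older, so by Ascoli--Arzel\`a every sequence $\delta_n\to 0$ has a subsequence along which $\delta_n u_{\delta_n}$ converges locally uniformly to some continuous function. The heart of the matter is to show that every such limit is the \emph{constant} $\int_{\re^3}F\,d\mu$; uniqueness of the limit then gives convergence of the whole family.

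The key step is the integration-by-parts identity. Fix $\delta>0$ and pair the equation \eqref{probcellappross0} with $\mu$: since $\int_{\re^3}\mathcal L u_\delta\,d\mu=\int_{\re^3}u_\delta\,\mathcal L^*\mu=0$ by Lemma~\ref{eqmu}, one formally obtains
\[
\delta\int_{\re^3}u_\delta\,d\mu=\int_{\re^3}F\,d\mu.
\]
The delicate point is to justify this: $u_\delta$ grows (a priori only $|u_\delta|\le C/\delta$ is known, but more refined local gradient bounds from elliptic regularity are available), while $\mu$ decays, as recorded in the Remark after Lemma~\ref{eqmu}, with $\int (y_1^4+y_2^4+y_3^2)\,d\mu<\infty$. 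So I would truncate: multiply \eqref{probcellappross0} by $\mu\,\chi_R$ for a cutoff $\chi_R$ supported in $B_{2R}$ and equal to $1$ on $B_R$, integrate by parts, and control the boundary/commutator terms using the polynomial decay of $\mu$ together with interior Schauder estimates on $u_\delta$ (which bound $Du_\delta$, $D^2u_\delta$ locally in terms of $\|u_\delta\|_\infty\le C/\delta$ and $\|F\|_\infty$). Letting $R\to\infty$ yields the identity above. This truncation/decay estimate is where condition A2) (the $k_i>4$) enters, as flagged in the Remark, since it is what makes $\mathcal L^*\mu=0$ compatible with the required integrability.

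Having the identity $\delta\int u_\delta\,d\mu=\int F\,d\mu$ for every $\delta$, I pass to the limit along a subsequence with $\delta_n u_{\delta_n}\to \ell(\cdot)$ locally uniformly. On one hand, $\delta_n\int u_{\delta_n}\,d\mu=\int \delta_n u_{\delta_n}\,d\mu\to \int \ell\,d\mu$ — here one again uses the $C/\delta$ bound and the integrability of $\mu$ to dominate and apply dominated convergence, so $\int\ell\,d\mu$ is finite and equals $\int F\,d\mu$. On the other hand, by \cite{MMT} (the existence and uniqueness of the ergodic constant recalled in the introduction) the limit $\ell$ is in fact the \emph{constant} $\lambda$; thus $\lambda\cdot\mu(\re^3)=\int F\,d\mu$, and since $\mu$ is a probability measure, $\lambda=\int_{\re^3}F\,d\mu$. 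As this value is independent of the subsequence, $\delta u_\delta\to\int F\,d\mu$ as $\delta\to0$, which is the assertion.

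**Main obstacle.** The genuinely nontrivial part is the rigorous justification of the duality identity $\delta\int u_\delta\,d\mu=\int F\,d\mu$: one must combine the decay of $\mu$ with quantitative local elliptic estimates on $u_\delta$ to kill the truncation errors, and this is precisely the place where the subelliptic structure of $\sigma\sigma^T$ and the Ornstein--Uhlenbeck drift with $k_1,k_2>4$ are used. Everything else is a soft compactness-plus-uniqueness argument.
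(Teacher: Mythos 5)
The paper itself does not supply a proof of Theorem~\ref{conv1}: right before Lemma~\ref{eqmu} the authors state that Lemmas~\ref{eqmu}, \ref{exudelta0} and Theorem~\ref{conv1} are recalled from \cite{MMT}, and refer the reader there for the detailed proof. So there is no internal proof to compare against, only the cited result. Your strategy --- pair \eqref{probcellappross0} against $\mu$, use ${\cal L}^*\mu=0$ to get $\delta\int u_\delta\,d\mu=\int F\,d\mu$ after a cutoff argument exploiting the decay $\int(y_1^4+y_2^4+y_3^2)\,d\mu<\infty$, and then identify the limit --- is the natural duality approach and is very likely the argument in \cite{MMT}.

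There is, however, a circularity in the step where you conclude that the subsequential limit $\ell$ of $\delta u_\delta$ is a constant. You justify it by invoking ``the existence and uniqueness of the ergodic constant recalled in the introduction,'' but that introductory passage says precisely that $\delta u_\delta$ converges locally to a constant, i.e.\ it \emph{is} Theorem~\ref{conv1}; you cannot use it inside its own proof. You need an independent source of constancy. The cleanest, valid under the standing hypothesis \eqref{F} alone, is the Liouville route: since $u_\delta$ solves \eqref{probcellappross0}, the rescaled function $\delta u_\delta$ solves $\delta(\delta u_\delta)-{\cal L}(\delta u_\delta)=\delta F$ with $|\delta u_\delta|\leq C$ by Lemma~\ref{exudelta0}; pass to the limit along the subsequence (local uniform convergence plus stability of viscosity solutions) to find that $\ell$ is a bounded continuous viscosity solution of $-{\cal L}\ell=0$ on $\re^3$, and then the Liouville theorem of \cite[Proposition 3.1]{MMT} --- already invoked in this paper in the proof of Lemma~\ref{Vindy} --- forces $\ell$ to be a constant. (The alternative of using Theorem~\ref{lipschitzNuovo} to get $\delta\left(u_\delta(\cdot)-u_\delta(0)\right)\to 0$ locally uniformly also yields constancy, but requires the stronger Lipschitz hypothesis \eqref{L} on $F$, not just the boundedness/continuity of \eqref{F} under which the theorem is stated.) With either fix, your identification $\lambda\,\mu(\re^3)=\int F\,d\mu$ and uniqueness across subsequences close the argument; the remaining work is the cutoff estimate that you already correctly flag as the main technical obstacle.
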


\subsection{Global Lipschitz continuity.}
In this section we derive the global Lipschitz continuity of the solution $u_\delta$ of (\ref{probcellappross0}) from its continuity under the weaker assumption of an at most linear growth of $F$. 
In our opinion, this result has its own interest.
We assume: 
\begin{equation}\label{L}
\left\{\begin{array}{l}
(A1)-(A2)\\
F \mbox{ Lipschitz continuous in } \re^3 \mbox{ with Lipschitz constant }L\\
|F(y)|\leq C_F(|y|+1)\quad \forall y\in \re^3
\end{array}\right.
\end{equation}

It is clear that the globally Lipschitz continuity implies that $|F(y)|\leq L(|y|+1)\quad \forall y\in \re^3$, but in the following proof we want to underline separately the
dependence on the Lipschitz continuity and the linear behaviour at infinity.
\begin{lemma}
Under assumptions \eqref{L} there exists a constant $C$ such that
\begin{equation}\label{sublindelta}
|u_\delta(y)|\leq C\bigg(|y|+ \frac{1}{\delta}\bigg), \quad y\in\re^3.
\end{equation}
\end{lemma}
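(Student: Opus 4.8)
The plan is to build an explicit supersolution (and, symmetrically, a subsolution) of the form $C(|y|+1/\delta)$ for the operator
$\delta u - \mathrm{tr}(\sigma\sigma^T D^2 u) - b\cdot Du$
and then conclude by the comparison principle for \eqref{probcellappross0} (available since $\sigma\sigma^T$ grows quadratically and $b$ linearly, in the spirit of Da Lio--Ley \cite{DLL} as already used in Proposition \ref{Existence}). Since the distance function $|y|$ is not smooth at the origin, I would instead work with the smooth function $\psi(y):=\sqrt{1+|y|^2}$, which satisfies $\psi\sim |y|$ at infinity and whose first and second derivatives are bounded: $|D\psi|\le 1$ and $|D^2\psi|\le C$ uniformly in $y$. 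The candidate is $\Phi(y):= A\,\psi(y) + B/\delta$ with $A,B$ positive constants to be fixed, independent of $\delta$.

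The key computation is to plug $\Phi$ into the operator and estimate each term. We get
\[
\delta\Phi - \mathrm{tr}(\sigma\sigma^T D^2\Phi) - b\cdot D\Phi
= \delta A\psi + B - A\,\mathrm{tr}(\sigma\sigma^T D^2\psi) - A\, b\cdot D\psi.
\]
For the zeroth-order-in-$A$ part, $\delta A\psi\ge 0$ and we keep $B$ in reserve. The second-order term is controlled using \eqref{calL2nd}: since $|D^2\psi|\le C$ and $\sigma\sigma^T$ has entries bounded by $C(1+|y|^2)$, one has $|\mathrm{tr}(\sigma\sigma^T D^2\psi)|\le C(1+|y|^2)$; this is the dangerous term because it grows quadratically while $\psi$ only grows linearly. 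The decisive point is the drift: $-b\cdot D\psi = (k_1 y_1^2 + k_2 y_2^2 + k_3 y_3^2)/\psi$, which for $|y|$ large is bounded below by $c|y|$ for some $c>0$ using $k_1,k_2,k_3>0$. A more careful expansion of $-\mathrm{tr}(\sigma\sigma^T D^2\psi) - b\cdot D\psi$, keeping track of the exact quadratic coefficients coming from \eqref{calL2nd} against those coming from $b$, shows that the hypothesis $k_1>4$, $k_2>4$ makes the net quadratic coefficient strictly negative (this is precisely the role flagged for (A2) in \eqref{sublindelta}); hence for $|y|\ge R_0$ the combination $-A\,\mathrm{tr}(\sigma\sigma^T D^2\psi) - A\,b\cdot D\psi \ge 0$ once $A$ is chosen appropriately, and on the compact set $\{|y|\le R_0\}$ it is bounded below by $-C_0 A$. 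Choosing $B\ge C_0 A + \|F\|_{?}$ — more precisely, using the linear bound $|F(y)|\le C_F(|y|+1)$ from \eqref{L} and absorbing the $C_F|y|$ part into the $A\psi$-growth by taking $A$ large — one obtains
\[
\delta\Phi - \mathrm{tr}(\sigma\sigma^T D^2\Phi) - b\cdot D\Phi \ge F(y) \qquad \text{in } \re^3,
\]
so $\Phi$ is a (classical, hence viscosity) supersolution. By comparison, $u_\delta\le \Phi = A\psi + B/\delta \le C(|y|+1/\delta)$. The symmetric choice $-\Phi$ gives the lower bound, yielding \eqref{sublindelta}.

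The main obstacle is the matching of the quadratic terms: the second-order part $\mathrm{tr}(\sigma\sigma^T D^2\psi)$ produces a genuinely quadratic contribution (through the $4(y_1^2+y_2^2)\psi_{y_3y_3}$ term and the mixed $y_2\psi_{y_1y_3}$, $y_1\psi_{y_2y_3}$ terms in \eqref{calL2nd}), and this must be strictly dominated by the quadratic gain $(k_1y_1^2+k_2y_2^2+k_3y_3^2)/\psi$ from the Ornstein--Uhlenbeck drift. This is exactly where the thresholds $k_1>4$ and $k_2>4$ enter, so the estimates on $D^2\psi$ must be done with explicit constants (not merely $|D^2\psi|\le C$) to verify that the surviving coefficient of $|y|^2/\psi \sim |y|$ is positive. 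Once that sign is secured, the remaining choices of $A$, $B$, $R_0$ and the absorption of the linear growth of $F$ are routine.
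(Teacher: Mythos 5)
Your proposal is correct and follows essentially the same route as the paper: both construct an explicit supersolution of the form (constant)$\times$(smooth linear-growth function) $+$ (constant)$/\delta$, compute $-\mathcal{L}$ of the radial function term by term using \eqref{calL2nd}, and use the sign condition $k_1,k_2>4$ to make the coefficient of $(y_1^2+y_2^2)/|y|$ positive after subtracting the second-order contribution, before closing by the Da Lio--Ley comparison principle; the only cosmetic difference is that you smooth $|y|$ via $\sqrt{1+|y|^2}$ everywhere, whereas the paper modifies $|y|+1$ only inside a fixed ball $B_{r_0}$ and handles the compact part with an additive constant $2M_0/\delta$. One small imprecision worth correcting: your preliminary bound $|\mathrm{tr}(\sigma\sigma^T D^2\psi)|\le C(1+|y|^2)$ is too crude and, were it sharp, the argument would collapse; the exact computation shows this trace is in fact $O(|y|)$ (because $D^2\psi$ carries a $1/\psi$ factor, and the mixed terms $4y_2\psi_{y_1y_3}-4y_1\psi_{y_2y_3}$ cancel), and it is this linear-in-$|y|$ quantity whose coefficient is beaten by $(k_i-4)$.
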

\begin{proof}
The comparison principle for equation (\ref{probcellappross0}) comes from Da Lio - Ley (\cite{DLL}) 
(this is true also for elliptic operators, see \cite{BCM}).
The existence of a continuous viscosity solution $u_\delta$ in $\re^3$ comes from Perron Theorem, by finding sub- and super solution in $\re^3$. We remark that, in the case when $F$ is bounded by a constant $C_F$ in $\re^3$, a trivial supersolution is $ \frac{C_F}{\delta}$ (a subsolution  $-\frac{C_F}{\delta}$) and the result easily follows.\\
In the more general, sublinear case, let us introduce some constants:
\begin{eqnarray}
&&l:=\min\{ k_1-4, k_2-4, k_3 \} >0   \label{l}\\
&&C_l:=\max\{1,\frac{2 C_F}{l}\}  \label{Cl}\\
&&r_0:=1+\frac{2 C_l}{C_F} \label{r0}.
\end{eqnarray}
Let $U_0$ any regular function $U_0\in C^2(\re^3)$ such that $U_0(y)=\vert y \vert +1$ in $\re^3\setminus B_{r_0}$.
(For instance $U_0(r)=C_0+C_2r^2+C_4r^4$ in $\overline{B(0,r_0)}$ and $U_0(r)=r +1$ in $\re^3\setminus B_{r_0}$).\\
There exists a constant $M_0$ independent of $\delta\in[0,1]$ such that
 \begin{equation}
\label{M0}
\delta U_0-{\cal L}U_0-F(y)\geq -M_0 \quad \hbox{ in } \, B_{r_0}
\end{equation}
Let us define the function $U\in C^2(\re^3)$ as
\[
U(\cdot)=C_l \left(U_0(\cdot)+\frac{2M_0}{\delta}\right).
\]
We claim that $U$ is a supersolution to~\eqref{probcellappross0}.
Let us test the supersolution property first in $B_{r_0}$ then in $\re^3\setminus B_{r_0}$.
For $y\in B_{r_0}$, thanks to $C_l\geq 1$ (see \eqref{Cl}) 
and the definition of $M_0$ in \eqref{M0}, we have:
\begin{eqnarray*}
\delta U-{\cal L}U-F(y)&=&C_l(\delta U_0+2M_0-{\cal L}U_0)-F(y)\\
&\geq&
(C_l-1)F(y)+C_l M_0 \geq 0
\end{eqnarray*}
if $M_0$ is sufficiently large.
For $y\in \re^3\setminus B_{r_0}$, denote $r=\vert y \vert$
\begin{eqnarray*}
&&\delta U(y)-{\cal L}(y, DU(y),D^2U(y))-F(y) \\
&&\quad=C_l\left(\delta(|y|+1+\frac{2M_0}{\delta})
-\frac {2+4(y_1^2+y_2^2)}{r}
+\frac{(k_1y_1^2+k_2y_2^2+k_3y_3^2)}{r}\right)+\\
&&\qquad+C_l\frac {1}{r^3}(y_1^2+y_2^2)(1+4y_3^2)-F(y)\\
&&\quad \geq C_l\left(-\frac {2}{r}+\frac{(k_1-4)y_1^2+(k_2-4)y_2^2+k_3y_3^2)}{r}\right)-F(y)\\
&&\quad\geq C_l(-\frac {2}{r} +l r)-C_F(r+1)\\
&&\quad\geq 0
\end{eqnarray*}
where we used \eqref{l}, \eqref{Cl} and \eqref{r0}.
From the comparison principle then $u_\delta\leq U(y)=C_l \left(U_0+\frac{2M_0}{\delta}\right) \leq C(\vert y \vert +\frac{1}{\delta})$.
The same method applies to define a subsolution and to prove that $u_\delta\geq - C(\vert y \vert +\frac{1}{\delta})$.
\end{proof}

\begin{theorem}\label{lipschitzNuovo}
Under assumptions \eqref{L},
let $u_\delta$ be the unique continuous solution of (\ref{probcellappross0}) which satisfies \eqref{sublindelta}. 
 there holds
$$|u_\delta(y')-u_\delta(y)|\leq \psi(|y'-y|) \qquad \forall  y,y'\in\re^3,$$
where $\psi\in C^2(\re)$ is a concave increasing function with $\psi(0)=0$ and 
$\psi^{\prime}>\max\{\frac{L}{k_1-4}, \frac{L}{k_2-4}, \frac{L}{k_3}\}$ (recall that $L$ is the Lipschitz constant of~$F$) and it is independent of $\delta$.
In particular there holds
\begin{equation}
\label{Lipud}
|u_\delta(y')-u_\delta(y)|\leq \overline L(|y'-y|) \qquad \forall  y,y'\in\re^3, \forall \delta>0
\end{equation}
for $\overline L>\frac{L}{l}$, where $l$ is defined in \eqref{l}.
\end{theorem}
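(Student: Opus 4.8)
The plan is to obtain the Lipschitz bound by the classical doubling-of-variables technique combined with the construction of an explicit smooth concave modulus $\psi$ that is a supersolution of a suitable auxiliary inequality. Concretely, I would fix $\delta>0$ and, arguing by contradiction, suppose that the quantity
$\Phi(y,y') := u_\delta(y') - u_\delta(y) - \psi(|y'-y|) - \eta\bigl(|y|^2+|y'|^2\bigr)$
attains a strictly positive value for some small $\eta>0$ (the penalization $\eta(|y|^2+|y'|^2)$ is needed because the domain is unbounded; here the sublinear bound \eqref{sublindelta} guarantees that the supremum is attained at an interior point $(\bar y,\bar y')$ and that $\eta|\bar y|^2,\eta|\bar y'|^2\to 0$ as $\eta\to 0$). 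Writing the viscosity sub/supersolution inequalities for $u_\delta$ at $\bar y'$ and $\bar y$ respectively and subtracting, the zeroth-order terms give $\delta\bigl(u_\delta(\bar y')-u_\delta(\bar y)\bigr)\ge 0$ (up to terms vanishing with $\eta$), the right-hand side contributes $F(\bar y')-F(\bar y)\le L|\bar y'-\bar y|$, and we are left with controlling the difference of the second-order operators $-\mathrm{tr}(\sigma\sigma^T D^2)-b\cdot D$ evaluated on the test function $\psi(|y'-y|)$.

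The heart of the matter is the algebraic computation of
$\bigl[-\mathcal L_{y'} - \mathcal L_y\bigr]\,\psi(|y'-y|)$
along the relevant directions. Using A1) for $\sigma\sigma^T$, written out as in \eqref{calL2nd}, one finds that the pure second-order (horizontal) part contributes a term of the form $\psi''(r)\cdot(\text{nonnegative quadratic in the }y\text{ and }y'\text{ variables}) + \psi'(r)/r\cdot(\dots)$, where $r=|\bar y'-\bar y|$; since $\psi$ is concave, $\psi''\le 0$, so the leading contribution has the right sign, and the remaining lower-order pieces are $O(\psi'(r))$. The Ornstein–Uhlenbeck drift from A2) produces the crucial term $-b(\bar y')\cdot\nabla_{y'}\psi - b(\bar y)\cdot(-\nabla_{y}\psi)$, which after differentiating $\psi(|y'-y|)$ becomes $\psi'(r)\,\frac{(\bar y'-\bar y)\cdot(k_\star(\bar y'-\bar y))}{r}\ge \psi'(r)\,l\,r$ componentwise, where $l=\min\{k_1-4,k_2-4,k_3\}$ (the $-4$ coming precisely from absorbing the bad lower-order horizontal terms $4(y_1^2+y_2^2)$ into the $k_1,k_2$ part of the drift, exactly as in the proof of \eqref{sublindelta}). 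Collecting everything, the subtracted inequality forces
$0 \le -C_0\,l\,r\,\psi'(r) + C_1\psi'(r) + L\,r + o_\eta(1)$,
so choosing $\psi$ with $\psi'(s) > \max\{L/(k_1-4),L/(k_2-4),L/k_3\}$ (and concave, e.g. $\psi(s)=As - Bs^2$ for $s$ in a bounded range then extended linearly, or $\psi(s)=A(1-e^{-s})$ rescaled) yields a contradiction for $\eta$ small. Letting $\eta\to 0$ then gives $u_\delta(y')-u_\delta(y)\le\psi(|y'-y|)$ for all $y,y'$; swapping roles gives the full two-sided estimate, and since $\psi$ is concave with bounded slope it is globally $\overline L$-Lipschitz with any $\overline L>L/l$, which is \eqref{Lipud}.

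The main obstacle I expect is the bookkeeping in the second-order term: because $\sigma\sigma^T$ is degenerate and its entries grow quadratically (the $4(y_1^2+y_2^2)U_{y_3y_3}$ and mixed $4y_2 U_{y_1y_3}-4y_1U_{y_2y_3}$ pieces), one must verify that after the doubling the matrix inequality from the theorem on sums (Crandall–Ishii) produces a manifestly sign-definite combination — the coefficient multiplying $\psi''(r)$ must be shown to be $\ge 0$ uniformly, and the leftover terms linear in $\psi'(r)$ must be dominated by the good drift term $l\,r\,\psi'(r)$ after using Young's inequality on the quadratic growth. This is exactly the point where assumption A2) with the thresholds $k_1>4$, $k_2>4$ enters, and it is the step that makes the argument specific to the Heisenberg-type $\sigma$ rather than a routine adaptation of the uniformly elliptic case treated in \cite{G2,LN}.
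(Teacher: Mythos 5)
Your proposal is correct and follows essentially the same route as the paper's proof: doubling of variables with a quadratic penalization $\eta(|y|^2+|y'|^2)$ to compactify, an argument by contradiction that the gap $u_\delta(y')-u_\delta(y)-\psi(|y'-y|)$ can be made $O(\eta/\delta)$, the Crandall--Ishii theorem on sums, and the algebraic cancellation in which the good drift terms $k_i(\bar y_i'-\bar y_i)^2$ dominate the $4(\bar y_i'-\bar y_i)^2$ coming from the Heisenberg second-order part, requiring exactly $k_1,k_2>4$. The only cosmetic difference is that you anticipate needing Young's inequality to absorb leftover terms of order $\psi'(r)$, whereas the paper's choice of test directions $\zeta_i=e_i\sigma^T(\bar x)$, $\xi_i=e_i\sigma^T(\bar y)$ in the matrix inequality produces the cancellation exactly, with no residual terms to absorb.
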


\begin{proof}
For each $\eta>0$, we introduce the function
$$\Psi(x,y)=u(x)-u(y)-\psi(|x-y|)-\eta|x|^2-\eta|y|^2$$
where $\psi$ is a function as in the statement and for simplicity we take $u:=u_\delta$.

Assume for the moment that there holds
\begin{equation}\label{claim}
\Psi(x,y)\leq \frac{8\eta}{\delta}\qquad \forall x,y \in\re^3,\, \eta\in(0,1);
\end{equation}
then, for any $x, y\in\re^3$, as $\eta\to0^+$, we obtain the following inequality
\begin{equation*}
u(x)-u(y)\leq \psi(|x-y|)
\end{equation*}
which is equivalent to the statement because of the arbitrariness of~$x$ and~$y$.

Let us now prove inequality~\eqref{claim}; to this end, we shall proceed by contradiction. Let $(\overline x, \overline y)$ be a maximum point of function $\Psi$ in $\re^3\times \re^3$.
This maximum does exist since from (\ref{sublindelta})
we have that $\lim_{x\rightarrow +\infty}\frac{u(x)}{|x|^2}=0$.

Let us assume by contradiction that 
\begin{equation}\label{contradd}
\Psi(\overline x, \overline y)=u(\overline x)-u(\overline y)-\psi(|\overline x-\overline y|)-\eta|\overline x|^2-\eta|\overline y|^2> \frac{8\eta}{\delta}.
\end{equation}
Clearly, the points $\overline x$ and $\overline y$ cannot coincide, otherwise \eqref{contradd} is false.
We set $\tilde \psi(x,y):=\psi(|x-y|)+\eta(|x|^2+|y|^2)$ and we invoke \cite[Theorem 3.2]{CIL}: for every $\rho>0$ there exist two symmetric $3\times 3$ matrices $X$ and $Y$ such that
\begin{eqnarray}
&\label{cil1}
\left(p_x, X\right)\in {\mathcal J}^{2,+}u(\overline x),\quad
\left(p_y,Y\right)\in {\mathcal J}^{2,-}u(\overline y),\\
&\left(\begin{array}{cc} X&0\\0&-Y\end{array}\right)\leq
A+\rho A^2,  \label{cil3}
\end{eqnarray}
where
\begin{equation*}
p_x:=D_x\tilde \psi(\overline x,\overline y),\quad p_y:=-D_y\tilde \psi(\overline x,\overline y),\quad
A:=\left(\begin{array}{cc} D^2_{xx}\tilde \psi(\overline x,\overline y)
&D^2_{xy}\tilde \psi(\overline x,\overline y)\\D^2_{yx}\tilde \psi(\overline x,\overline y)&D^2_{yy}\tilde \psi(\overline x,\overline y)\end{array}\right).
\end{equation*}
We write explicitly $p_x$, $p_y$ and $A$:
\begin{eqnarray}\label{px}
p_x&=& \psi^{\prime}(|\overline x-\overline y|)\frac{\overline x-\overline y}{|\overline x-\overline y|}+2\eta\overline x= \psi^{\prime}(|\overline x-\overline y|) q +2\eta\overline x\\ \label{py}
p_y &=& \psi^{\prime}(|\overline x-\overline y|)\frac{\overline x-\overline y}{|\overline x-\overline y|}-2\eta\overline y= \psi^{\prime}(|\overline x-\overline y|) q -2\eta\overline y
\end{eqnarray}
where we defined 
\begin{equation}\label{q}
q:=\frac{\overline x-\overline y}{|\overline x-\overline y|}.
\end{equation}
Defining $B:=\frac{I-q\otimes q}{|\overline x-\overline y|}$, 
the matrix $A$ assumes the following form
\begin{multline}\label{A}
A=\psi^{\prime}(|\overline x-\overline y|)\left(\begin{array}{cc} B&-B\\-B&B\end{array}\right)+ \psi^{\prime\prime}(|\overline x-\overline y|)\left(\begin{array}{cc} q\otimes q&-q\otimes q\\-q\otimes q&q\otimes q\end{array}\right)\\+2\eta\left(\begin{array}{cc} I&0\\0&I\end{array}\right).
\end{multline}

From the definition of sub and supersolution and $(p_x, X)$, $(p_y, Y)$, we have
\begin{eqnarray*}
\delta u(\overline x)-tr(\sigma(\overline x)\sigma^T(\overline x)X)-b(\overline x)p_x&\leq& F(\overline x),\\
\delta u(\overline y)-tr(\sigma(\overline y)\sigma^T(\overline y)Y)-b(\overline y)p_y&\geq& F(\overline y).
\end{eqnarray*}
Subtracting the latter inequality from the former, we infer
\begin{multline}\label{totale}
\delta (u(\overline x)-u(\overline y))-
tr\bigg(\sigma(\overline x)\sigma^T(\overline x)X-\sigma(\overline y)\sigma^T(\overline y)Y\bigg)\\+\bigg(-b(\overline x)p_x+b(\overline y)p_y\bigg)\leq F(\overline x)-F(\overline y).
\end{multline}
We want to estimate from below the three terms on the left hand side of \eqref{totale}:
\begin{itemize}
\item[$i$)] ${\mathcal U}:=\delta (u(\overline x)-u(\overline y))$,
\item[$ii$)] ${\mathcal T}:-tr\bigg(\sigma(\overline x)\sigma^T(\overline x)X-\sigma(\overline y)\sigma^T(\overline y)Y\bigg)$,
\item[$iii$)] ${\mathcal G}:=-b(\overline x)p_x+b(\overline y)p_y$.
\end{itemize}
\noindent ($i$). The assumption by contradiction~\eqref{contradd} yields
\begin{equation}\label{U}
{\mathcal U}:=\delta (u(\overline x)-u(\overline y))\geq \delta \psi(|\overline x-\overline y|)+\delta(\eta|\overline x|^2+\eta|\overline y|^2)+8\eta\geq 8\eta.
\end{equation}

\noindent($ii$). Multiplying relation \eqref{cil3} by $(\zeta, \xi)$ where $\zeta$ and $\xi$ are vectors in $\re^3$ we obtain
$(\zeta, \xi)\left(\begin{array}{cc} X&0\\0&-Y\end{array}\right)(\zeta, \xi)^T\leq
(\zeta, \xi)A(\zeta, \xi)^T+\rho (\zeta, \xi)A^2(\zeta, \xi)^T$.
Then, using \eqref{A}, we have
\begin{multline}\label{1}
\zeta X\zeta^T -\xi Y\xi^T \leq \psi^{\prime}(|\overline x-\overline y|)
\bigg(\zeta B\zeta^T -\xi B\zeta^T-\zeta B\xi^T +\xi B\xi^T\bigg)+\\
+\psi^{\prime\prime}(|\overline x-\overline y|)(<\zeta-\xi, q>)^2+
2\eta(|\zeta|^2+|\xi|^2)+ \rho a(\zeta,\xi)
\end{multline}
where we denoted by 
$a(\zeta,\xi):=(\zeta, \xi)A^2(\zeta, \xi)^T$ and $q$ is defined in~\eqref{q}.\\
Recall that, for any choice of two orthonormal basis $\{e_i\}_{i=1,2}$ and $\{\tilde e_i\}_{i=1,2}$ in $\re^2$, 
(if $e_i$ is a orthonormal basis, $\ds tr M=\sum_{i=1}^2 e_i M e_i^T$)
we have
\begin{eqnarray*}
tr(\sigma\sigma^TX)&=&tr(\sigma^TX\sigma)=\sum_{i=1}^2 e_i\sigma^T X \sigma e_i^T\\
tr(\sigma\sigma^TY)&=& tr(\sigma^TY\sigma)= \sum_{i=1}^2 {\tilde e}_i\sigma^T X \sigma {\tilde e}_i^T.
\end{eqnarray*}
We choose
\begin{equation}\label{zixi}
\zeta _i=e_i\sigma^T(\overline x),\quad \xi _i={\tilde e}_i\sigma^T(\overline y);
\end{equation}
 hence, we get
$$
{\mathcal T}=
-\sum_{i=1}^2 e_i\sigma^T(\overline x) X \sigma(\overline x) e_i^T+
\sum_{i=1}^2 {\tilde e}_i\sigma^T(\overline y) Y \sigma(\overline y) {\tilde e}_i^T=
-(\sum_{i=1}^2 \zeta_i X \zeta_i ^T-\sum_{i=1}^2 \xi_i Y \xi_i^T).
$$
Then from inequality \eqref{1} we obtain
\begin{eqnarray*}
{\mathcal T}&\geq& -\psi^{\prime}(|\overline x-\overline y|)
\sum_{i=1}^2 (\zeta_i-\xi_i)B (\zeta_i-\xi_i)^T-
\psi^{\prime\prime}(|\overline x-\overline y|)\sum_{i=1}^2(<\zeta_i-\xi_i, q>)^2\\&&-2\eta\sum_{i=1}^2(|\zeta_i|^2+|\xi_i|^2)- \rho \sum_{i=1}^2a(\zeta_i,\xi_i).
\end{eqnarray*}
From the definition of the matrix $B$ we have
\begin{eqnarray}\label{T}&&\\\notag
{\mathcal T}&\geq& -\frac{\psi^{\prime}}{|\overline x-\overline y|}
\sum_{i=1}^2 (|\zeta_i-\xi_i|^2)+
(\frac{\psi^{\prime}}{|\overline x-\overline y|}-\psi^{\prime\prime})\sum_{i=1}^2(<\zeta_i-\xi_i, q>)^2\\ \notag
&&\qquad-2\eta\sum_{i=1}^2(|\zeta_i|^2+|\xi_i|^2)-\rho \sum_{i=1}^2a(\zeta_i,\xi_i)\\\notag
&\geq& 
-\frac{\psi^{\prime}}{|\overline x-\overline y|}
\sum_{i=1}^2 (|\zeta_i-\xi_i|)^2-
2\eta\sum_{i=1}^2(|\zeta_i|^2+|\xi_i|^2)-\rho \sum_{i=1}^2a(\zeta_i,\xi_i),
\end{eqnarray}
where the last inequality was obtained taking into account that $\psi$ is increasing and concave, so
$\frac{\psi^{\prime}}{|\overline x-\overline y|}-\psi^{\prime\prime}\geq 0$.
\vskip 0.2 cm
($iii$).
From expressions (\ref{px}) and (\ref{py}) of $p_x$ and $p_y$, we have
\[{\mathcal G}=\bigg(-b(\overline x)+b(\overline y)\bigg){\psi^{\prime}}(|\overline x-\overline y|)q
+2\eta\bigg(-b(\overline x)\overline x-b(\overline y)\overline y\bigg).\]

By our choice of $b$ (recall: $b(x)=(-k_1x_1, -k_2x_2,-k_3x_3)$), ${\mathcal G}$ becomes
\begin{multline}\label{G}
{\mathcal G}=\bigg(k_1(\overline x_1-\overline y_1)^2+k_2(\overline x_2-\overline y_2)^2+k_3(\overline x_3-\overline y_3)^2\bigg)
\frac{{\psi^{\prime}}(|\overline x-\overline y|)}{|\overline x-\overline y|}+\\
2\eta \bigg(k_1(\overline x_1^2+\overline y_1^2)+k_2(\overline x_2^2+\overline y_2^2)+k_3(\overline x_3^2+\overline y_3^2)\bigg).
\end{multline}

Now, replacing inequalities \eqref{U}, \eqref{T} and \eqref{G} in \eqref{totale}, we obtain
\begin{eqnarray*}
&L|\overline x-\overline y|\geq F(\overline x)-F(\overline y)\geq 
{\mathcal U}+{\mathcal T}+{\mathcal G}\geq\\ 
&8\eta
-\frac{\psi^{\prime}}{|\overline x-\overline y|}
\sum_{i=1}^2 (|\zeta_i-\xi_i|)^2-
2\eta\sum_{i=1}^2(|\zeta_i|^2+|\xi_i|^2)- \rho \sum_{i=1}^2a(\zeta_i,\xi_i)+\nn\\
&\bigg(k_1(\overline x_1-\overline y_1)^2+k_2(\overline x_2-\overline y_2)^2+k_3(\overline x_3-\overline y_3)^2\bigg)
\frac{\psi^{\prime}}{|\overline x-\overline y|}+\nn\\
&
2\eta \bigg(k_1(\overline x_1^2+\overline y_1^2)+k_2(\overline x_2^2+\overline y_2^2)+k_3(\overline x_3^2+\overline y_3^2)\bigg).\nn
\end{eqnarray*}
Passing to the limit as $\rho \rightarrow 0^+$, 
 we obtain
\begin{eqnarray}\label{totale2}
&L|\overline x-\overline y|\geq \\
&\eta\left(8-
2\sum_{i=1}^2(|\zeta_i|^2+|\xi_i|^2) +2k_1(\overline x_1^2+\overline y_1^2)+2k_2(\overline x_2^2+\overline y_2^2)+2k_3(\overline x_3^2+\overline y_3^2)\right)+\nn\\
&+ \frac{\psi^{\prime}}{|\overline x-\overline y|}
\left[-\sum_{i=1}^2 (|\zeta_i-\xi_i|)^2+k_1(\overline x_1-\overline y_1)^2+k_2(\overline x_2-\overline y_2)^2+k_3(\overline x_3-\overline y_3)^2)\right].\nn
\end{eqnarray}
The contradiction is easily obtained choosing as the two orthonormal basis the canonical basis
in $\re^2$, $e_1={\tilde e}_1=(1,0)$ and $e_2={\tilde e}_2=(0,1)$.
Then the vectors  $\zeta_i$ and $\xi_i$ (see \eqref{zixi}), with $\overline x=(\overline x_1,\overline x_2,\overline x_3)$ and 
$\overline y=(\overline y_1,\overline y_2,\overline y_3)$
become
$\zeta_1=(1, 0, 2\overline x_2)$,
$\zeta_2=(0,1, -2\overline x_1)$,
$\xi_1=(1, 0, 2\overline y_2)$,
$\xi_2=(0,1, -2\overline y_1)$, and 
$$|\zeta_1|^2=1+4\overline x_2^2,\ |\zeta_2|^2=1+4\overline x_1^2,\ |\xi_1|^2=1+4\overline y_2^2,\ |\xi_2|^2=1+4\overline y_1^2,$$
 $$|\zeta_1-\xi_1|^2= 4(\overline x_2-\overline y_2)^2,\ |\zeta_2-\xi_2|^2= 4(\overline x_1- \overline y_1)^2.$$
Hence, relation \eqref{totale2} becomes
\begin{eqnarray*}
&&L|\overline x-\overline y|\geq\eta[8-8-8(\overline x_1^2+\overline y_1^2+\overline x_2^2+\overline y_2^2) +\\
&&
2k_1(\overline x_1^2+\overline y_1^2)+2k_2(\overline x_2^2+\overline y_2^2)+2k_3(\overline x_3^2+\overline y_3^2)]+\nn\\
&& \frac{\psi^{\prime}}{|\overline x-\overline y|}
[(k_1-4)(\overline x_1-\overline y_1)^2+(k_2-4)(\overline x_2-\overline y_2)^2)+k_3(\overline x_3-\overline y_3)^2]\nn
\\ 
&&\geq
2\eta[(k_1-4)(\overline x_1^2+\overline y_1^2)+
(k_2-4)(\overline x_2^2+\overline y_2^2)+ k_3(\overline x_3^2+\overline y_3^2)]
+\nn\\
&&\frac{\psi^{\prime}}{|\overline x-\overline y|}
[(k_1-4)(\overline x_1-\overline y_1)^2+
(k_2-4)(\overline x_2-\overline y_2)^2+ k_3(\overline x_3-\overline y_3)^2].\nn
\end{eqnarray*}
By our choice of $k_1$, $k_2$ and $k_3$ in ($A2$) (namely, $k_1, k_2>4$ , $k_3>0$) we get
\begin{equation*}
L|\overline x-\overline y|^2\geq 
\psi^{\prime}(|\overline x-\overline y|)
[(k_1-4)(\overline x_1-\overline y_1)^2+
(k_2-4)(\overline x_2-\overline y_2)^2+ k_3(\overline x_3-\overline y_3)^2],\nn
\end{equation*}
thus we obtain a contradiction provided that we choose a function $\psi$ such that
$$\psi^{\prime}>\max\{\frac{L}{k_1-4}, \frac{L}{k_2-4}, \frac{L}{k_3}\}.$$
Hence, the proof of our claim~\eqref{claim} is accomplished.
The second statement of the theorem easily follows by taking $\psi(z)=\overline Lz$, with 
$\overline L>\max\{\frac{L}{k_1-4}, \frac{L}{k_2-4}, \frac{L}{k_3}\}$.
\end{proof}
\begin{remark}
Similar arguments can be applied to other matrices still related to degenerate elliptic operators as, for example, in dimension 2:
 $$ \sigma(y):=(\sigma_{ij}(y))_{i,j} \hbox{ with } \sigma_{ij}(y)= a_{ij}y_1+b_{ij}y_2+c_{ij}$$ 
which in particular encompasses the Ornstein-Uhlenbeck operator  and Grushin operator, respectively
$$
\ds\sigma_{OU}=\left(\begin{array}{cc} 1&0\\0&0\end{array}\right), \qquad \ds\sigma_G=\left(\begin{array}{cc} 1&0\\0&y\end{array}\right).$$
For the Grushin operator in a forthcoming paper \cite{MMT2} we will obtain a local H\"older continuity uniform in $\delta$ using a technique introduced in \cite{FIL}.
\end{remark}

\subsection{A key estimate on the growth of the approximate corrector}

The aim of this section is to establish that the solution to the approximating cell problem has a logarithmic growth at infinity. Our arguments are borrowed form \cite[Proposition 3.2]{G2}.
\begin{lemma}\label{ghilli233}
Assume ($A1$) and ($A2$). Let $u_\d(y)$ be the solution of equation \eqref{probcellappross0} with~\eqref{F}. 
There exists $C>0$ such that
\begin{equation*}
|u_\d(y)-u_\d(0)|\leq C\left[1+\log((y_1^2+y_2^2)^2+y_3^2+1)\right] \qquad \forall y\in \re^3,\, \d\in(0,1).
\end{equation*}
\end{lemma}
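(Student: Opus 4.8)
The strategy is to build a supersolution of the form $C[1+\log(\varphi(y))]$ where $\varphi(y)=(y_1^2+y_2^2)^2+y_3^2+1$, so that the comparison principle for \eqref{probcellappross0} (from Da Lio--Ley \cite{DLL}) forces the stated upper bound on $u_\delta(y)-u_\delta(0)$; the lower bound is symmetric. The function $\varphi$ is tailored to the anisotropic structure of $\mathcal L$ exhibited in \eqref{calL2nd}: the quartic growth in $(y_1,y_2)$ matches the coefficient $4(y_1^2+y_2^2)$ in front of $U_{y_3y_3}$ and the linear growth of $b$, while the quadratic growth in $y_3$ matches $U_{y_1y_1}+U_{y_2y_2}$ and $b_3$. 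Because the invariant measure satisfies $\int(y_1^4+y_2^4+y_3^2)\,d\mu<\infty$ (the Remark after Lemma~\ref{eqmu}), $\varphi$ is morally $1/\mu$-like and a natural Lyapunov candidate.

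First I would set $W(y):=C\log\varphi(y)$ with $C$ to be chosen, and compute $-\mathcal L W$. Writing $W=C\,\log\varphi$, one has $D W=C\,D\varphi/\varphi$ and $D^2W=C\,D^2\varphi/\varphi-C\,D\varphi\otimes D\varphi/\varphi^2$, so
\[
-\mathcal L W=-\frac{C}{\varphi}\,\mathcal L\varphi+\frac{C}{\varphi^2}\,\mathrm{tr}\!\left(\sigma\sigma^T D\varphi\otimes D\varphi\right).
\]
The key algebraic point is that $-\mathcal L\varphi$ has \emph{quadratic} growth: using \eqref{calL2nd} and $b(y)=-(k_1y_1,k_2y_2,k_3y_3)$, the dominant terms in $-b\cdot D\varphi$ are $4k_1 y_1^2(y_1^2+y_2^2)+4k_2 y_2^2(y_1^2+y_2^2)+2k_3 y_3^2$, which grow like $\varphi$, whereas $-\mathrm{tr}(\sigma\sigma^T D^2\varphi)$ grows at most like $(y_1^2+y_2^2)$, i.e. like $\sqrt\varphi$. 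Hence $-\mathcal L\varphi\geq c\,\varphi - C'$ for constants $c>0$ (depending on $k_1,k_2,k_3$) and $C'$, giving $-\frac{C}{\varphi}\mathcal L\varphi\geq cC - C''/\varphi \geq cC/2$ outside a fixed ball. The gradient-squared correction term is nonnegative, so it only helps. Thus $-\mathcal L W \geq cC/2$ on $\re^3\setminus B_{R_0}$ for a fixed $R_0$, and on the compact ball $\overline{B_{R_0}}$ the quantity $-\mathcal L W$ is bounded below by some $-M$.

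Next I would assemble the supersolution: since $F$ is bounded (say $|F|\le \|F\|_\infty$) and $\delta\in(0,1)$, I claim $\Phi(y):=u_\delta(0)+\|F\|_\infty/\delta$ dominates the right-hand side on $B_{R_0}$ trivially, while on the exterior region the term $-\mathcal L W$ with $C$ chosen so that $cC/2\ge \|F\|_\infty + 1$ handles $F$. A cleaner packaging: take $\overline W(y):=u_\delta(0)+A + C\log\varphi(y)$ with $A,C$ fixed constants independent of $\delta$, chosen large enough that (i) on $\overline{B_{R_0}}$ one has $\delta\overline W-\mathcal L\overline W-F\ge \delta u_\delta(0)-\delta\|\cdot\|+A\delta -M -\|F\|_\infty$ — here I must absorb the $\delta u_\delta(0)$ term, which is harmless since $|\delta u_\delta(0)|\le C$ uniformly by Lemma~\ref{exudelta0} — and (ii) on the exterior, $\delta\overline W-\mathcal L\overline W-F\ge 0$ from the computation above, again using $|\delta u_\delta(0)|\le C$ to control the lower-order contribution of $\delta\overline W$. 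Since $\overline W(0)=u_\delta(0)+A+C\log 1= u_\delta(0)+A > u_\delta(0)$ and, by the growth of $\log\varphi$ versus the sublinear bound \eqref{sublindelta}, $\overline W(y)-u_\delta(y)\to+\infty$ is \emph{false} — so instead I compare directly via the comparison principle of \cite{DLL}, which applies here because both $u_\delta$ and $\overline W$ have at most polynomial growth and $\sigma\sigma^T,b$ grow quadratically/linearly. This yields $u_\delta(y)\le \overline W(y)=u_\delta(0)+A+C\log\varphi(y)\le u_\delta(0)+C''[1+\log\varphi(y)]$. Repeating with $-u_\delta$ gives the matching lower bound, completing the proof.

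**Main obstacle.** The delicate point is the verification of the supersolution inequality \emph{uniformly in $\delta\in(0,1)$}: the term $\delta\overline W(y)$ contains $\delta C\log\varphi(y)$, which is unbounded in $y$, and one must check it does not destroy the sign of $\delta\overline W-\mathcal L\overline W-F$ at infinity. This is where the quadratic lower bound $-\mathcal L\varphi\ge c\varphi-C'$ is essential: it produces a term of order $C$ (a constant) from $-\frac{C}{\varphi}\mathcal L\varphi$, which must dominate both $\|F\|_\infty$ and the genuinely growing but sign-indefinite contribution — except that $\delta C\log\varphi\ge 0$, so in fact $\delta\overline W\ge 0$ also helps and the only real work is the $-\mathcal L\varphi\ge c\varphi-C'$ estimate, i.e. checking that the second-order part of $\mathcal L$ (which grows like $\sqrt\varphi$) cannot overwhelm the first-order drift part (which grows like $\varphi$). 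This is exactly the place where assumption $(A2)$, $k_1,k_2>4$ and $k_3>0$, enters, matching the coefficient $4(y_1^2+y_2^2)$ in \eqref{calL2nd}.
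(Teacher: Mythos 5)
Your overall strategy --- build a logarithmic supersolution based on $\varphi(y)=(y_1^2+y_2^2)^2+y_3^2+1$ and conclude by comparison --- is the same as the paper's, and your key computation that $-{\cal L}\varphi\ge c\varphi-C'$ (hence $-{\cal L}(C\log\varphi)\ge cC/2$ outside a fixed ball) is correct. However, the comparison step has a genuine gap. You attempt to compare $u_\delta$ with $\overline W(y)=u_\delta(0)+A+C\log\varphi(y)$ on all of $\re^3$, which requires $\overline W$ to be a supersolution everywhere. On $\overline{B_{R_0}}$ the only nonnegative helping term is $\delta\overline W$, which is $O(\delta)$ there (since $A,C$ are fixed, $\log\varphi$ is bounded, and $|\delta u_\delta(0)|\le C_0$), while $-{\cal L}\overline W-F$ can be as negative as $-M-\|F\|_\infty$, a quantity independent of $\delta$. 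Hence for small $\delta$ the supersolution inequality fails on the ball for any fixed $A$ and $C$; repairing it by taking $A$ of order $1/\delta$ would destroy precisely the uniformity in $\delta$ that the lemma asserts. The remark that you ``must absorb the $\delta u_\delta(0)$ term'' points at the wrong obstruction: the trouble is not $\delta u_\delta(0)$ but the unavoidable negative $O(1)$ contributions from ${\cal L}$ and $F$ on the compact ball.

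The paper avoids this by comparing only on the exterior domain $\re^3\setminus B_R$ and supplying the missing boundary data via Theorem~\ref{lipschitzNuovo}, i.e.\ the global Lipschitz bound $|u_\delta(y)-u_\delta(z)|\le\overline L|y-z|$ uniform in $\delta$. Concretely, $g_1(y)=C_1\log\big((y_1^2+y_2^2)^2+y_3^2\big)+\max_{\overline{B_R}}u_\delta$ is a supersolution on $\re^3\setminus B_R$ that dominates $u_\delta$ on $\partial B_R$ by construction, comparison on the exterior gives $u_\delta\le g_1$ there, and the Lipschitz estimate converts $\max_{\overline{B_R}}u_\delta$ into $u_\delta(0)+\overline L R$ and also handles $|y|\le R$ directly. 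Your proposal never invokes the uniform global Lipschitz continuity of $u_\delta$, which is the essential prior input; without it, neither the boundary condition on $\partial B_R$ nor the bound for small $|y|$ can be closed with constants independent of $\delta$.
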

\begin{proof}
We can argue as in \cite[Proposition 3.2]{G2}, replacing its Lemma 3.3 with our Theorem~\ref{lipschitzNuovo}; 
to this end, our first step is to claim that, for $C_1$ and $R$ sufficiently large, the function $g(y):=C_1 \log((y_1^2+y_2^2)^2+y_3^2)$ is a supersolution to \eqref{probcellappross0} in $\re^3\setminus B_R$. Indeed, by equality~\eqref{calL2nd} there holds:
\begin{eqnarray*}
tr(\sigma\sigma^TD^2g(y))
&=&\frac{8C_1(y_1^2+y_2^2)}{(y_1^2+y_2^2)^2+y_3^2}\\
b(y)\cdot Dg(y)&=&-C_1\frac{4(y_1^2+y_2^2)(k_1y_1^2+k_2y_2^2)+2k_3y_3^2}{(y_1^2+y_2^2)^2+y_3^2}.
\end{eqnarray*}
By these identities, we get
\begin{equation*}
\delta g(y)-tr(\sigma(y)\sigma^T(y)D^2 g)-b(y)Dg\geq F(y),\ y\in\re^3\setminus B_R,
\end{equation*}
provided that $C$ and $R$ are sufficiently large. 
Now
if $\max_{\overline B_R}u_{\delta}\leq 0$ then we have 
$\max_{\overline B_R}u_{\delta}\leq g(y)$ for any $y\in\partial \overline B_{R}$.
By the comparison principle established in \cite{DLL}, we obtain
$u_{\delta}\leq g$ in $\re^3$.
If $\max_{\overline B_R}u_{\delta}> 0$, we note that 
$g_1(\cdot):=g(\cdot)+\max_{\overline B_R}u_{\delta}$ is still a supersolution of 
\eqref{probcellappross0} in $\re^3\setminus B_R$. Hence, again by the comparison principle we have
$u_{\delta}\leq g_1$ in $\re^3$. By Theorem \ref{lipschitzNuovo} we infer:
$u_\d(y)-u_\d(0)\leq g_1(y)+\overline L R$
which gives one of the two inequalities of the statement. The proof of the other one is similar and we shall omit it.
\end{proof}
\subsection{The cell problem}

\begin{theorem}\label{TH33}
Under assumptions A1)-A6) of Section \ref{pbsp}, for every $(x,p,X)$ the constant $\lambda=-\int_{\re^3} H(\overline x,y,\overline p, \overline X,0)d\mu(y)$ ($\mu$ is the invariant measure defined in Lemma \ref{eqmu}) is the unique constant  such that the cell problem \eqref{cell} admits a solution $w(y)$ which  is globally Lipschitz continuous and satisfies the following estimate:
\begin{equation}\label{wlog}
|w(y)-w(0)|\leq C\left[1+\log((y_1^2+y_2^2)^2+y_3^2+1)\right] \qquad \forall y\in \re^3.
\end{equation}
Moreover, the solution~$w$ is unique up to an additive constant.
\end{theorem}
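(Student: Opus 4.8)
The plan is to obtain the corrector $w$ as a limit of the approximate correctors $u_\delta$ after subtracting their value at the origin, following the standard ergodic-approximation scheme but using the uniform estimates proved in the preceding subsections. First I would set $v_\delta(y):=u_\delta(y)-u_\delta(0)$, where $u_\delta$ solves \eqref{probcellappross0} with $F(y)=-H(x,y,p,X,0)$ (which satisfies \eqref{F} by A1)--A5), and also \eqref{L} by \eqref{regH1}, so that Theorem~\ref{lipschitzNuovo} and Lemma~\ref{ghilli233} both apply). By Theorem~\ref{lipschitzNuovo} the family $\{v_\delta\}$ is uniformly (globally) Lipschitz with constant $\overline L$ independent of $\delta$, and $v_\delta(0)=0$; by Lemma~\ref{ghilli233} it satisfies the uniform logarithmic bound \eqref{wlog} (with $w$ replaced by $v_\delta$). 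Hence $\{v_\delta\}$ is locally equibounded and equicontinuous, so by Ascoli--Arzel\`a and a diagonal argument there is a sequence $\delta_n\to0^+$ with $v_{\delta_n}\to w$ locally uniformly; the function $w$ inherits the global Lipschitz bound and the estimate \eqref{wlog}. Moreover, by Lemma~\ref{exudelta0}, $\delta u_\delta$ is locally uniformly bounded and, by Theorem~\ref{conv1}, $\delta u_\delta\to\int_{\re^3}F\,d\mu=-\int_{\re^3}H(x,y,p,X,0)\,d\mu(y)=\lambda$ locally uniformly; in particular $\delta_n u_{\delta_n}(0)\to\lambda$.

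Next I would pass to the limit in the equation. Rewrite \eqref{probcellappross0} as
\[
\delta u_\delta(y)-tr(\sigma\sigma^T D^2 u_\delta)-b\cdot Du_\delta=F(y),
\]
and note that $-tr(\sigma\sigma^T D^2v_\delta)-b\cdot Dv_\delta=-tr(\sigma\sigma^T D^2u_\delta)-b\cdot Du_\delta$ since $v_\delta$ and $u_\delta$ differ by a constant, while $\delta u_\delta(y)=\delta v_\delta(y)+\delta u_\delta(0)$. Thus $v_\delta$ solves
\[
-tr(\sigma\sigma^T D^2 v_\delta)-b\cdot Dv_\delta=F(y)-\delta u_\delta(0)-\delta v_\delta(y).
\]
Along $\delta_n\to0$, the right-hand side converges locally uniformly to $F(y)-\lambda$ (because $\delta_n v_{\delta_n}\to0$ locally uniformly, using the uniform growth bound on $v_{\delta_n}$, and $\delta_n u_{\delta_n}(0)\to\lambda$). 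By stability of viscosity solutions (the operator ${\cal L}$ having continuous coefficients), $w$ is a viscosity solution of $-tr(\sigma\sigma^T D^2w)-b\cdot Dw=F(y)-\lambda$, i.e. of \eqref{cell} with $\lambda=-\int H(x,y,p,X,0)\,d\mu(y)$. Under the additional assumption A6) the coefficients are smooth enough that interior elliptic/subelliptic regularity promotes $w$ to $C^{2,\alpha}$, but this is not needed for the present statement.

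For uniqueness of the constant: if $\lambda'$ were another constant for which \eqref{cell} has a solution with at most logarithmic growth, one runs the standard comparison argument. Suppose $\lambda'>\lambda$ and let $w'$, $w$ be the corresponding solutions; then $w-w'$ is a subsolution of $-{\cal L}(w-w')=\lambda-\lambda'<0$, and one derives a contradiction by comparing against the Lyapunov-type supersolution (the function $C_1\log((y_1^2+y_2^2)^2+y_3^2)$ from the proof of Lemma~\ref{ghilli233}, suitably scaled), exploiting that $w-w'$ has sublinear — in fact logarithmic — growth so that the strict sign of $\lambda-\lambda'$ cannot be absorbed. Equivalently, one may integrate the equation against the invariant measure $\mu$: testing \eqref{cell} with $d\mu$ and using ${\cal L}^*\mu=0$ (Lemma~\ref{eqmu}) together with the decay $\int(y_1^4+y_2^4+y_3^2)\,d\mu<\infty$ to justify the integration by parts for a function of logarithmic growth, one gets $\lambda=\int H(x,y,p,X,0)\,d\mu\cdot(-1)$, pinning down $\lambda$ uniquely. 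Uniqueness of $w$ up to additive constants follows the same way: the difference of two solutions (for the same $\lambda$) is an ${\cal L}$-harmonic function of logarithmic growth, and by the strong maximum principle combined with the comparison against logarithmic supersolutions it must be constant.

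The main obstacle I anticipate is the uniqueness part rather than the existence part: making the comparison principle of \cite{DLL} or the $\mu$-integration argument rigorous for functions with logarithmic (not bounded) growth requires care, since one must certify that the boundary terms at infinity vanish — this is exactly where the decay estimate $\int(y_1^4+y_2^4+y_3^2)\,d\mu<\infty$ from the Remark after Lemma~\ref{eqmu}, the superlinear Lyapunov function for ${\cal L}$, and the precise logarithmic growth \eqref{wlog} must be combined. The existence half, by contrast, is a routine Ascoli--Arzel\`a plus viscosity-stability argument once Theorem~\ref{lipschitzNuovo} and Lemma~\ref{ghilli233} are in hand.
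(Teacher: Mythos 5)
Your existence argument is essentially identical to the paper's: set $w_\delta:=u_\delta-u_\delta(0)$, use the uniform global Lipschitz bound (Theorem~\ref{lipschitzNuovo}) together with Lemma~\ref{ghilli233} for local equiboundedness/equicontinuity, pass to a locally uniform subsequential limit, and identify $\lambda=\lim\delta u_\delta=-\int H\,d\mu$ via Theorem~\ref{conv1}.

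For uniqueness, however, your route diverges from the paper's, and both variants you sketch have issues. The paper (following \cite{FIL}) uses the quartic Lyapunov function $U_1(y)=y_1^4+y_2^4+y_3^2$, which satisfies the \emph{superlinear} Lyapunov inequality $-\mathcal L U_1\geq \gamma U_1-\beta$ on all of $\re^3$. Because $w_1,w_2$ are globally Lipschitz (hence $u=w_1-w_2$ has at most linear growth), the gap in growth rates gives $u-\rho U_1\to-\infty$ for every $\rho>0$, so a maximum of $u-\rho U_1$ exists and the maximum principle forces $u\leq\rho U_1$; letting $\rho\to0^+$ kills $u$. Your first idea — comparing against the logarithmic supersolution from Lemma~\ref{ghilli233} — is much more delicate: $g$ satisfies the supersolution inequality only outside a ball and is singular at the origin, and crucially $-\mathcal L g$ is merely bounded below by a constant rather than by a coercive term, so the clean separation of growth rates that makes the paper's argument work is lost. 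Your second idea (integrating \eqref{cell} against $\mu$, using $\mathcal L^*\mu=0$) is a genuinely valid alternative in spirit and directly pins $\lambda$, but to make it rigorous you need $w\in C^2$ (or a careful weak formulation) plus the decay of $\mu$ to justify the integration by parts — which is precisely the regularity the paper obtains from A6) via Theorem~\ref{regw}. Your remark that the $C^{2,\alpha}$ regularity ``is not needed for the present statement'' is therefore not accurate: both the paper's maximum-principle argument and your own $\mu$-integration argument require $w$ to be a classical (or at least distributional) solution, and A6) is assumed in Theorem~\ref{TH33} exactly so that Theorem~\ref{regw} can be invoked. Finally, for uniqueness of $w$ up to a constant the paper again uses $U_1$ to localize the supremum of $w_1-w_2$ on a ball and then the strong maximum principle; your proposal is consistent with that, modulo the same regularity caveat.
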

\begin{proof}
To prove the existence of such a $\lambda$
we argue as in \cite[Proposition 3.2]{G2}, replacing its Lemma 3.3 with 
our Theorem~\ref{lipschitzNuovo}.
We consider the solution $u_{\delta}$ of the approximated cell problem \eqref{probcellappross0}, recalling that, from A3), $F(y)$ is bounded in $\re^3$; then $w_{\delta}(y):=u_{\delta}(y)-u_{\delta}(0)$
satisfies
$$
\delta w_{\delta}(y)-tr(\sigma(y)\sigma^T(y)D^2 w_{\delta})-b(y)Dw_{\delta}=F(y)-\delta u_{\delta}(0).
$$
From the Lipschitz continuity of $u_{\delta}(y)$ in \eqref{Lipud}
we have that 
$$|w_{\delta}(y)|=|u_{\delta}(y)-u_{\delta}(0)|\leq \bar L\vert y \vert$$
and 
$$|w_{\delta}(y)-w_{\delta}(z)|=|u_{\delta}(y)-u_{\delta}(z)|\leq \bar L|y-z|$$
hence $w_{\delta}(y)$ are locally equibounded and equicontinuous. Then by Ascoli-Arzela theorem and standard diagonal argument we can conclude that there exists a function $w$ with the desired properties.
Moreover from Theorem \ref{conv1} we know that
$$\delta u_{\delta} \rightarrow \int H(x,y,p,X,0)d\mu(y)=-\lambda.$$
To prove the uniqueness of $\lambda$ and the uniqueness up to a constant of $w$, we use the arguments of \cite[Thm.4.5]{FIL}. For the sake of completeness, let us recall them briefly.\\
First of all we assume that any solution $w$ of \eqref{cell} is regular and this is not retrictive because the smoothness will be proved in Theorem \ref{regw} in the next section. \\
Assume by contradiction that there exist two constants $\lambda_1\ne\lambda_2$ and two regular functions $w_1$, $w_2$ such that $(\lambda_1, w_1)$ and $(\lambda_2,w_2)$ are both solutions to problem~\eqref{cell}.
Without any loss of generality we assume $\lambda_1<\lambda_2$. 
We set $u(\cdot):= w_1(\cdot)-w_2(\cdot)$ and $U_1(y):=y_1^4+y_2^4+y_3^2$. Without any loss of generality (eventually adding a constant), we assume $\sup_{\re^3}u>0$. We observe that, for $\gamma>0$ sufficiently small and $\beta>0$ sufficiently large, there hold
\begin{eqnarray}\label{celli}
-tr(\sigma\sigma^TD^2 u)-bDu&=&\lambda_1-\lambda_2\qquad \textrm{in }\re^3\\
-tr(\sigma\sigma^TD^2 U_1)-bDU_1&\geq&\gamma U_1-\beta  \qquad \textrm{in }\re^3.\label{celliU}
\end{eqnarray}
(For instance an explicit and tedious calculation gives: $\gamma<2k_3$, $\gamma<4k_1$, $\gamma<4k_2$, $\beta\geq \frac{100}{4k_1-\gamma}+ \frac{100}{4k_2-\gamma}$.)\\
Consider $\rho>0$ so small to have $\rho \beta<\lambda_2-\lambda_1$
and set $U(\cdot):=\rho U_1(\cdot)$. By the global Lipschitz continuity of $w_1$ and $w_2$, we have 
\begin{equation}
\label{FIL2}
\lim_{|y|\to \infty}(u(y)-U(y))=-\infty.
\end{equation}
Hence, there exists an open bounded set $\Omega\subset \re^3$ such that $u\leq U$ in $\re^3\setminus \Omega$.
By linearity of the operator, relations \eqref{celli} and \eqref{celliU} entail that the function $\eta(\cdot):=u(\cdot)-U(\cdot)$ satisfies
\[-tr(\sigma\sigma^TD^2 \eta)-bD\eta\leq -\rho \gamma U_1 -(\lambda_2-\lambda_1-\rho \beta)<0\qquad \textrm{in }\re^3\]
where the last inequality is due to our choice of $\rho$ and to $U_1\geq 0$.
Applying the maximum principle to $\eta$ on the domain $\Omega$ we obtain: $\eta\leq 0$ in $\Omega$. Hence, we have: $\eta\leq 0$ in $\re^3$, namely
\[u(y)\leq \rho U_1(y)\qquad \forall y\in\re^3.\]
Letting $\rho\to0^+$, we get $u\leq 0$ in $\re^3$ which gives the desired contradiction.

Let us now pass to prove that if $(\lambda,w_1)$ and $(\lambda,w_2)$ are both solutions to \eqref{cell} then $w_1=w_2+C$, for some constant $C$. By \eqref{celliU}, there exists $R>0$ such that
\[-tr(\sigma\sigma^TD^2 U_1)-bDU_1> 0\qquad \textrm{for }|y|>R.\]
For $u=w_1-w_2$ as before, we claim that there holds
\begin{equation}\label{claimFIL}
\sup_{\re^3}u=\max_{\overline{B(0,R)}}u.
\end{equation}
Actually, for any $\rho>0$, for $\eta(\cdot)=u(\cdot)-\rho U_1(\cdot)$,
\[-tr(\sigma\sigma^TD^2 \eta)-bD\eta< 0 \qquad \textrm{for }|y|>R.\]
As before (see \eqref{FIL2}) $\ds \lim_{|y|\to\infty}\eta(y)=-\infty$ and this implies that $\eta$ attains his maximum on $\re^3$.\\
By the maximum principle, $\eta$ cannot attain its maximum over $\overline{B(0,R)}^C$ at any point in its interior. Then
\[u(y)-\rho U_1(y)\leq \max_{|y'|=R}\left(u(y')-\rho U_1(y')\right)\qquad \forall |y|\geq R;\]
letting $\rho\to0^+$, we obtain our claim \eqref{claimFIL}.

By \eqref{claimFIL}, for any $r>R$, the strong maximum principle on $u$ over $B(0,r)$  ensures that $u$ is a constant function on $B(0,r)$. By the arbitrariness of $r$, we obtain the desired result. 
\end{proof}

\subsection{Regularity of the corrector $w$}
In this subsection we prove the $C^2$-regularity of the corrector $w$.  This result seems not straightforward. Actually, since our operator $\cal L$ contains second order horizontal derivatives and Euclidean first derivatives as well, the second order part of the operator does not immediately regularize  the first order one.
On the other hand is worth to observe that, for $H(x,\cdot,p,X,0)\in C^{\infty}$, the solution $w$ of \eqref{cell} is $C^{\infty}$ by hypoellipticity.

We start with a lemma which states the equivalence between solution in the sense of distributions and continuous viscosity solutions, under a growth condition at infinity.

\begin{lemma}\label{ishii}
Consider the equation 
\begin{equation}\label{eqlemma}
-tr(\sigma\sigma^TD^2 \chi)-b(y)D\chi +K\chi=R(y),\ y\in\re^3,
\end{equation}
where $R$ is a bounded globally Lipschitz continuous function and $K$ is a strictly positive constant.
Then\\
1) there exists a unique bounded and globally Lipschitz continuous viscosity solution $\chi$;\\
2) $\chi$ is a solution in the sense of distributions;\\
3) any  bounded solution of \eqref{eqlemma} in the sense of distributions coincides with $\chi$.
\end{lemma}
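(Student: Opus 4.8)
The plan is to treat the three claims in the order (1), (2), (3), since each builds on the previous one. For claim (1), the operator in \eqref{eqlemma} is of the same degenerate type as in \eqref{probcellappross0} but with a bounded right-hand side $R$, so existence and uniqueness of a bounded continuous viscosity solution $\chi$ follow exactly as in Lemma \ref{exudelta0}: the comparison principle of Da Lio--Ley \cite{DLL} applies because $\sigma\sigma^T$ grows quadratically and $b$ linearly, and $\pm\|R\|_\infty/K$ are a super/subsolution pair, so Perron's method gives a solution with $|\chi|\le\|R\|_\infty/K$. For the global Lipschitz bound I would invoke Theorem \ref{lipschitzNuovo}: indeed, rewriting \eqref{eqlemma} as $K\chi-\mathcal L\chi=R(y)$ and noting that $R$ is globally Lipschitz and bounded, assumptions \eqref{L} are met (with $\delta$ replaced by the fixed constant $K$), and the argument there — which only uses the structure of $\sigma\sigma^T$ and the Ornstein--Uhlenbeck drift via $k_1,k_2>4$, $k_3>0$ — yields $|\chi(y')-\chi(y)|\le \overline L|y'-y|$ with $\overline L$ depending only on $K$, the Lipschitz constant of $R$, and the $k_i$. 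Strictly speaking Theorem \ref{lipschitzNuovo} is stated for the specific equation \eqref{probcellappross0}, so I would remark that its proof goes through verbatim with the zeroth-order coefficient $\delta$ replaced by $K$; the contradiction argument only needs the coefficient of $u(\overline x)-u(\overline y)$ in \eqref{totale} to be nonnegative, which holds for any $K>0$.

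For claim (2), I would use the hypoellipticity / interior regularity available once the first-order term is frozen. Since $\chi$ is now known to be globally Lipschitz, $b(y)D\chi$ is a well-defined $L^\infty_{\mathrm{loc}}$ function, so $\chi$ is a viscosity solution of the linear subelliptic equation $-\mathrm{tr}(\sigma\sigma^T D^2\chi)=R(y)-K\chi+b(y)D\chi=:\tilde R(y)\in L^\infty_{\mathrm{loc}}$. The horizontal Laplacian $\sum_{i}X_i^2$ built from the Heisenberg vector fields is hypoelliptic (Hörmander), and by the equivalence of viscosity and distributional (and $W^{2,p}_{\mathrm{loc}}$, by the subelliptic Calderón--Zygmund theory, cf.\ the arguments used for Theorem \ref{regw}) notions of solution for such operators, $\chi$ is a solution of \eqref{eqlemma} in $\mathcal D'(\re^3)$. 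Concretely one can test against $\varphi\in C_c^\infty$ and pass from the pointwise viscosity inequalities to the integral identity by the standard mollification argument, using that $\chi\in W^{2,p}_{\mathrm{loc}}$ for all $p<\infty$.

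For claim (3), let $\tilde\chi$ be any bounded distributional solution of \eqref{eqlemma}. Then $v:=\tilde\chi-\chi$ is a bounded distributional solution of the homogeneous equation $-\mathrm{tr}(\sigma\sigma^T D^2 v)-b(y)Dv+Kv=0$. By hypoelliptic regularity (the right-hand side is $0\in C^\infty$, and the operator $-\mathcal L$ satisfies Hörmander's condition) $v\in C^\infty(\re^3)$, so $v$ is in particular a classical, hence viscosity, solution. Now I would conclude by a Lyapunov-type maximum principle as in the uniqueness part of Theorem \ref{TH33}: using $U_1(y)=y_1^4+y_2^4+y_3^2$, which by \eqref{celliU} satisfies $-\mathrm{tr}(\sigma\sigma^T D^2 U_1)-bDU_1>0$ outside a ball $B_R$, the function $\eta:=v-\rho U_1$ satisfies $-\mathrm{tr}(\sigma\sigma^T D^2\eta)-bD\eta+K\eta<K\rho U_1\le \cdots$; more simply, since $v$ is bounded and $U_1\to+\infty$, $\eta$ attains its max on $\re^3$, and since $K>0$ the classical maximum principle forbids a positive interior maximum of $\eta$ where $-\mathrm{tr}(\sigma\sigma^T D^2\eta)-bD\eta\ge 0$; letting $\rho\to0^+$ gives $v\le 0$, and symmetrically $v\ge0$, so $v\equiv0$. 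The main obstacle is claim (2): one must make rigorous the passage from viscosity to distributional solution for a degenerate operator whose first-order part is not regularized by the second-order part, which is precisely the delicate point flagged before Theorem \ref{regw}; I would handle it by first establishing $W^{2,p}_{\mathrm{loc}}$ estimates for the frozen subelliptic equation and only then identifying the two notions of solution.
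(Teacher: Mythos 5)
Your handling of claim (1) coincides with the paper's: both invoke Lemma~\ref{exudelta0} and Theorem~\ref{lipschitzNuovo}, noting that the zeroth-order coefficient $\delta$ can be replaced by the fixed $K>0$ without affecting the argument. For claims (2) and (3) you take a genuinely different, and in part problematic, route. The paper establishes (2) simply by citing \cite[Theorem~1]{Ishii}, which directly asserts the equivalence between continuous viscosity solutions and distributional solutions for linear second-order equations with continuous coefficients; no $W^{2,p}_{\mathrm{loc}}$ estimates are needed. Your proposal instead freezes $b(y)D\chi$ as an $L^\infty_{\mathrm{loc}}$ right-hand side for the pure horizontal operator and invokes subelliptic Calder\'on--Zygmund theory. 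This has a real gap: once you move $b\cdot D\chi$ to the right-hand side you are no longer dealing with a viscosity solution of a PDE with continuous data (the notion ``viscosity solution with $L^\infty$ right-hand side'' requires $L^p$-viscosity theory, not the classical framework used here), and the subelliptic $W^{2,p}$ machinery you gesture at is far from off-the-shelf for this mixed horizontal/Euclidean structure --- this is precisely the difficulty the paper flags and circumvents by not decoupling the operator. Your remark that the obstacle is ``the delicate point flagged before Theorem~\ref{regw}'' conflates two different issues: the regularity of $w$ is indeed delicate, but the viscosity--distributional equivalence for the \emph{full} linear operator is handled directly by Ishii's theorem. For claim (3), the paper proceeds by hypoellipticity to get $\overline\chi$ smooth, then applies \cite[Theorem~2]{Ishii} to conclude that $\chi_1$ is a continuous viscosity solution, and finally invokes uniqueness of bounded viscosity solutions (from the Da Lio--Ley comparison principle). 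Your variant --- showing directly that the smooth bounded function $v$ vanishes via a Lyapunov-type maximum principle with $U_1(y)=y_1^4+y_2^4+y_3^2$ --- is correct and arguably more self-contained, avoiding the second Ishii citation, though it reproves uniqueness that the comparison principle already supplies. In summary: (1) is right and matches the paper, (3) is a valid alternative, but (2) as written is not a complete proof and should be replaced by the direct citation of Ishii's equivalence theorem.
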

\begin{proof}
1)  Follows from Lemma \ref{exudelta0} and Theorem \ref{lipschitzNuovo}. \\
2) Follows from
\cite[Theorem 1]{Ishii}.\\
3) If $\chi_1$ is a solution of \eqref{eqlemma} in the sense of distributions, we define $\overline \chi:=\chi_1-\chi$, ($\overline\chi$ is bounded). By linearity, $\overline\chi$ solves in the sense of distributions
$$-tr(\sigma\sigma^TD^2 \overline\chi)-b D\overline\chi +K\overline\chi=0.$$
By the hypoellipticity of the operator, $\overline\chi$ is smooth. Hence 
$\chi_1=\overline \chi+\chi$ is continuous and, by \cite[Theorem 2]{Ishii} is also a viscosity solution. By the uniqueness of sublinear viscosity solutions of \eqref{eqlemma}, we get 
$\chi_1=\chi$.
\end{proof}

\begin{theorem}\label{regw}
Under assumptions (A1)-(A6),
let $w$ be the solution of the cell problem \eqref{cell} found in Theorem \ref{TH33}. Then $w\in C^{2,\alpha}_{loc}(\re^3)$, for some $\alpha\in(0,1)$.
\end{theorem}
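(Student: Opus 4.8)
The plan is to upgrade the regularity of $w$ in stages, starting from the fact (already available from Theorem \ref{TH33}) that $w$ is a globally Lipschitz continuous viscosity solution of \eqref{cell} with $\log$-growth, and exploiting assumption (A6), which guarantees that $F(y)=-H(x,y,p,X,0)$ together with $\partial_{y_3}F$ and $\partial_{y_3}^2F$ are bounded and globally Lipschitz. The first step is to rewrite the cell problem in the form to which Lemma \ref{ishii} applies: since $\lambda$ is known and $w$ is bounded on compacts, for any fixed ball we can add $Kw$ to both sides and view $w$ locally as a distributional (equivalently, by Lemma \ref{ishii}, viscosity) solution of $-\mathrm{tr}(\sigma\sigma^TD^2\chi)-b\cdot D\chi+K\chi=R(y)$ with $R=F+\lambda+Kw$; the point of Lemma \ref{ishii} is precisely that viscosity and distributional solutions coincide here, so we are free to differentiate the equation in the distributional sense.

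The heart of the argument is a Bernstein-type / difference-quotient estimate in the $y_3$-direction. Observe that the vector field $\partial_{y_3}$ commutes with the horizontal vector fields $X_1=\partial_{y_1}+2y_2\partial_{y_3}$, $X_2=\partial_{y_2}-2y_1\partial_{y_3}$ generating $\sigma\sigma^T$ (indeed $\sigma$ does not depend on $y_3$), while $[b\cdot D,\partial_{y_3}]=k_3\partial_{y_3}$. Hence, differentiating \eqref{cell} formally in $y_3$, the function $v:=\partial_{y_3}w$ solves
\begin{equation*}
-\mathrm{tr}(\sigma\sigma^TD^2v)-b\cdot Dv+k_3v=\partial_{y_3}F(y)\qquad\text{in }\re^3,
\end{equation*}
which, thanks to (A6), has bounded globally Lipschitz right-hand side and strictly positive zeroth-order coefficient $k_3>0$. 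By Lemma \ref{ishii} this equation has a unique bounded, globally Lipschitz viscosity $=$ distributional solution; to identify it with $\partial_{y_3}w$ rigorously I would work with the difference quotients $w^h(y):=\big(w(y+he_3)-w(y)\big)/h$, which solve the same equation with right-hand side the difference quotient of $F$ in $y_3$, are uniformly bounded and Lipschitz by the global Lipschitz bound on $w$, and pass to the limit $h\to0$ using the uniqueness in Lemma \ref{ishii}. This yields $\partial_{y_3}w\in W^{1,\infty}_{loc}$, and one more application to $\partial_{y_3}^2w$ (using that $\partial_{y_3}^2F$ is also bounded and Lipschitz, and $[b\cdot D,\partial_{y_3}^2]=2k_3\partial_{y_3}^2$) gives $\partial_{y_3}^2w$ bounded and locally Lipschitz as well.

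Once $w$, $\partial_{y_3}w$, $\partial_{y_3}^2w$ are all (locally) Lipschitz, the cell equation \eqref{cell} can be read, using the explicit expression \eqref{calL2nd}, as a uniformly elliptic equation in the two horizontal variables $(y_1,y_2)$ for $w(\cdot,\cdot,y_3)$ with $y_3$ as a parameter:
\begin{equation*}
-w_{y_1y_1}-w_{y_2y_2}=4(y_1^2+y_2^2)w_{y_3y_3}+4y_2w_{y_1y_3}-4y_1w_{y_2y_3}+b\cdot Dw-H(x,y,p,X,0)+\lambda =:\tilde R(y),
\end{equation*}
where every term on the right is now a locally Lipschitz (in particular locally bounded and locally $C^{0,\alpha}$) function of $y$: the terms involving $y_3$-derivatives are Lipschitz by the previous step, $b\cdot Dw$ is locally Lipschitz since $b$ is smooth and $Dw\in L^\infty_{loc}$, and $y\mapsto H(x,y,p,X,0)$ is Lipschitz by (A3)/(A5). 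Thus $-\Delta_{(y_1,y_2)}w=\tilde R\in C^{0,\alpha}_{loc}$, and interior Schauder estimates for the Laplacian give $w\in C^{2,\alpha}_{loc}$ in $(y_1,y_2)$; combined with the $C^{1,\alpha}_{loc}$ control in $y_3$ and a standard bootstrapping of the mixed derivatives $w_{y_iy_3}$ (which solve, after differentiating in $y_3$, elliptic equations in $(y_1,y_2)$ with Hölder right-hand side), one concludes $w\in C^{2,\alpha}_{loc}(\re^3)$.

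The main obstacle is the step that turns the formal $y_3$-differentiation into a rigorous argument: because the coefficient $4(y_1^2+y_2^2)$ multiplying $w_{y_3y_3}$ is unbounded and the equation is only subelliptic (not elliptic) in the $y_3$-direction, one cannot invoke standard interior elliptic estimates to gain a derivative in $y_3$ directly, and must instead route everything through the difference-quotient/uniqueness machinery of Lemma \ref{ishii} together with the crucial algebraic facts that $\sigma$ is independent of $y_3$ and that $b$ is linear and diagonal — this is exactly why assumption (A6) isolates the $y_3$-derivatives of $F$ and why (A2) is invoked here. Once that regularity in $y_3$ is secured, the remaining passage to $C^{2,\alpha}$ via freezing $y_3$ and using the ellipticity in $(y_1,y_2)$ is routine.
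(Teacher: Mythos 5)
Your overall strategy coincides with the paper's: differentiate twice in $y_3$, use Lemma~\ref{ishii} to identify $\partial_{y_3}w$ and $\partial^2_{y_3}w$ as bounded, globally Lipschitz functions, then return to the original equation and invoke elliptic regularity in the remaining variables. The framing of the last step as a $2$D Laplacian in $(y_1,y_2)$ with $y_3$ frozen, rather than the paper's $3$D Laplacian $\Delta w$, is cosmetic. The difference-quotient route you propose for legitimizing the $y_3$-differentiation is a reasonable alternative to the paper's direct distributional differentiation, but one small inaccuracy: because $b(y+he_3)=b(y)-hk_3e_3$, the equation satisfied by $w^h$ carries the extra term $k_3\,\partial_{y_3}w(\cdot+he_3)$; it is not literally ``the same equation with right-hand side the difference quotient of $F$.'' That term is what produces the zeroth-order coefficient $k_3$ in the limit, and it must be carried along rather than suppressed.

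The genuine gap is the assertion that every term in $\tilde R$ ``is now a locally Lipschitz (in particular \dots\ locally $C^{0,\alpha}$) function.'' That is false at this stage. The two applications of Lemma~\ref{ishii} give $\partial_{y_3}w$ and $\partial^2_{y_3}w$ in $\mathrm{BLip}$, hence $w_{y_1y_3},w_{y_2y_3}\in L^\infty_{loc}$ --- bounded, not Hölder. And $b\cdot Dw$ is only locally bounded: ``$b$ smooth and $Dw\in L^\infty_{loc}$'' gives boundedness of the product, not Lipschitz continuity (for that you would need $Dw$ to be Hölder, which is exactly what you are trying to prove). Hence $\tilde R\in L^\infty_{loc}$ only, and the $2$D elliptic equation gives at best $w\in W^{2,p}_{loc}$ in $(y_1,y_2)$, i.e.\ $Dw\in C^{0,\alpha}_{loc}$, not $C^{2,\alpha}$. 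The paper closes the gap with an explicit intermediate round: first combine the boundedness of $-tr(\sigma\sigma^T D^2w)$, of the $y_3$-derivatives, and of $w_{y_3y_3}$ to deduce $\Delta w\in L^\infty_{loc}$ and hence $Dw\in C^{0,\alpha}_{loc}$; then run the identical argument with $w$ replaced by $u=\partial_{y_3}w$ (and~\eqref{eqconG}, \eqref{Kri0} replaced by \eqref{equ}, \eqref{w33c2}) to obtain $Du\in C^{0,\alpha}_{loc}$, which upgrades $w_{y_1y_3},w_{y_2y_3},w_{y_3y_3}$ to $C^{0,\alpha}_{loc}$; only then is the right-hand side of the Laplace equation Hölder and Schauder yields $w\in C^{2,\alpha}_{loc}$. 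Your concluding ``bootstrapping'' sentence gestures at this, but again presupposes Hölder right-hand sides before they have been established, so as written the argument does not close.
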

\begin{proof}
Let us denote by $X_1=(\partial_{y_1}+2y_2\partial_{y_3})$ and $X_2=(\partial_{y_2}-2y_1\partial_{y_3})$ the two vector fields associated to the two columns of the matrix $\sigma$.
Recall that these two vectors are the generators of the Heisenberg group and span all $\re^3$ because their commutator is $[X_1, X_2]=-4\frac{\partial}{\partial y_3}$.\\
Along the proof $\alpha$ is a strictly positive constant which may change line to line.\\
The corrector $w$ solves 
\begin{equation}\label{eqconG}
-tr(\sigma(y)\sigma^T(y)D^2 w(y))-b(y)Dw(y)=G(y)
\end{equation}
with $$
G(y):=\lambda-H(\overline x,y,D_x\psi(\overline t, \overline x),D_{xx}^2\psi(\overline t, \overline x),0).
$$
First let us get the Lipschitz continuity of $\frac{\partial w}{\partial y_3}$.
Derivating equation \eqref{eqconG} with respect to $y_3$ we obtain that the function $u:=\frac{\partial w}{\partial y_3}$ solves in the sense of distributions
 \begin{equation}\label{equ}
-tr(\sigma\sigma^TD^2 u)-bDu+k_3u=\frac{\partial G}{\partial y_3},
\end{equation}
with $k_3>0$ by assumption A2). Note that $u$ is bounded by Theorem \ref{lipschitzNuovo}, then by Lemma \ref{ishii} we get that $u$ is Lipschitz continuous and it is also a viscosity solution; hence 
\begin{equation}\label{Kri0}
u=\frac{\partial w}{\partial y_3}\in BLip(\re^3).
\end{equation}
Deriving equation \eqref{equ} with respect to $y_3$, we get that the function 
$z:={\partial^2 w}/{\partial y_3^2}$ solves 
\begin{equation}\label{eqz}
-tr(\sigma\sigma^TD^2 z)-bDz+2k_3z=\frac{\partial^2 G}{\partial y_3^2}.
\end{equation}
By assumption A6), we can apply Lemma \ref{ishii} also to~\eqref{eqz} and we get that the function $z$ is globally Lipschitz continuous, i.e.
\begin{equation}\label{w33c2}
\frac{\partial^2 w}{\partial y_3^2}\in BLip(\re^3).
\end{equation}
Now we study the regularity of $w$ with respect to $y_1$ and $y_2$; to this end, let us come back to \eqref{eqconG}. 
From the Lipschitz continuity of $w$ (see Theorem~\ref{lipschitzNuovo}), we get 
 \begin{equation*}\label{11}
-tr(\sigma\sigma^TD^2 w)\in L^{\infty}_{loc}(\re^3).
\end{equation*}
By \eqref{Kri0}, we have
\begin{equation*}
\frac{\partial^2 w}{\partial y_1\partial y_3}, \frac{\partial^2 w}{\partial y_2\partial y_3},
\frac{\partial^2 w}{\partial y_3^2}\in L^\infty(\re^3).
\end{equation*}
Taking into account the explicit expression of $-tr(\sigma\sigma^TD^2 w)$
we have that 
\begin{equation*}
\frac{\partial^2 w}{\partial y_1^2}+\frac{\partial^2 w}{\partial y_2^2} \in L^{\infty}_{loc}(\re^3).
\end{equation*}
This relation and \eqref{w33c2} imply $\Delta w\in L^\infty_{loc}(\re^3)$, ($\Delta$ is the Euclidean Laplacian).
Hence from classical results on uniformly elliptic equations we obtain
\begin{equation*}
Dw\in C^{0,\alpha}_{loc}(\re^3).
\end{equation*}
Now we can replace $w$ with $u$,  \eqref{Kri0} with \eqref{w33c2} and \eqref{eqconG} with \eqref{equ}, using the same arguments we get:
\begin{equation}\label{gradientecontinuo}
Du\in C^{0,\alpha}_{loc}(\re^3).
\end{equation}

In particular
\begin{equation*}
\frac{\partial^2 w}{\partial y_1\partial y_3}, \frac{\partial^2 w}{\partial y_2\partial y_3},
\frac{\partial^2 w}{\partial y_3^2}\in C^{0,\alpha}_{loc}(\re^3).
\end{equation*}
As before we have that $\Delta w\in C^{0,\alpha}_{loc}(\re^3)$.
Hence, from classical results on uniformly elliptic equations we obtain the statement.

\end{proof}
\begin{remark}{\rm
We remark that in this proof 
 the structure of the operator ${\cal L}$ and Theorem~\ref{lipschitzNuovo} play a crucial role, this allows us to overcome the application of some deep results on the hypoelliptic theory.}
\end{remark}

\section{Proof of Theorem \ref{maintheor}.}\label{sect:proof}

In this section we prove 
the convergence of the solution $V^{\epsilon}$ of \eqref{HJBSP} to the solution of the effective equation \eqref{EFFSP}.

 Let us recall from Proposition~\ref{Existence} that, for every compact $K\subset  \re^n$, the solutions $V^\epsilon$ are equibounded in $(0,T)\times K\times \re^3$, hence the following semilimits
\begin{eqnarray*}
\overline V(t,x,y)&=&\limsup_{\epsilon\to 0^+, t'\to t, x'\to x, y'\to y}V^{\epsilon}(t',x',y')\qquad \textrm{for } t<T\\
\overline V(T,x,y)&=&\limsup_{t'\to T^-, x'\to x, y'\to y}\overline V(t',x',y')\qquad \textrm{for } t=T;
\end{eqnarray*}
(and similarly for $\underline V$ replacing $\limsup$ by $\liminf$)  are well defined.
This two step definition of $\overline V$ is needed to overcome an expected initial layer.

For the sake of clarity we shall divide the proof in several steps, as follows:
\begin{itemize}
\item[Step 1.] $\overline V$ and $\underline V$ are independent of $y$;
\item[Step 2.] $\overline V$ and $\underline V$ are respectively a subsolution and a supersolution of the parabolic equation \eqref{EFFSP};
\item[Step 3.] $\overline V(T,x)\leq \overline g(x)\leq \underline V(T,x)$, where $\overline g(x)$ is defined in  \eqref{opeff};
\item[Step 4.]  $\overline V= \underline V=:V$ and $V^{\epsilon}\rightarrow V$ locally uniformly.
\end{itemize}

\subsection{Step 1}
\begin{lemma}\label{Vindy}
Under assumptions A1)-A5), $\overline V$ and $\underline V$ are independent of $y$.
\end{lemma}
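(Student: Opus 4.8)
The plan is to exploit the strong ergodicity of the fast subsystem, which is encoded by the coercive term $-\frac1\epsilon\mathcal L$ together with the superlinear Lyapunov function $U_1(y)=y_1^4+y_2^4+y_3^2$ used already in Section~3. Concretely, I would show that $\overline V$ is a viscosity subsolution (and, symmetrically, $\underline V$ a supersolution) in the $y$-variable of the stationary equation $-\mathcal L_y\overline V=0$ in $\re^3$; since $\mathcal L$ is the generator of an ergodic process admitting the unique invariant measure $\mu$ of Lemma~\ref{eqmu}, its only bounded-on-compacts sub/supersolutions with sublinear growth are constants in $y$, and the claim follows.

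The first step is to freeze $(t,x)$ and fix a point $\overline y\in\re^3$ at which $\overline V(\cdot,t,x):=y\mapsto\overline V(t,x,y)$ is tested from above by a smooth function $\phi(y)$, with a strict local maximum. Because $V^\epsilon$ is locally equibounded (Proposition~\ref{Existence}), there are points $(t_\epsilon,x_\epsilon,y_\epsilon)\to(t,x,\overline y)$ realizing local maxima of $V^\epsilon(t',x',y')-\phi(y')$ up to the usual perturbation terms; here one must localize carefully in $y$ since the domain is unbounded, adding a term like $\eta|y|^2$ or using the Lyapunov function $U_1$ to confine the maxima, exactly in the spirit of the argument in the proof of Theorem~\ref{TH33} where \eqref{FIL2} is used. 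Writing the viscosity subsolution inequality for $V^\epsilon$ at $(t_\epsilon,x_\epsilon,y_\epsilon)$ and multiplying through by $\epsilon$, every term except $-\mathcal L(y_\epsilon,D_y\phi,D^2_{y}\phi)$ carries a factor $\epsilon$ (the $x$-Hamiltonian $H$ is evaluated with $D^2_{xy}$ contributions of order $\epsilon^{-1/2}$ against a test function independent of $x$, hence those vanish; $a V^\epsilon$, $\partial_t V^\epsilon$ are $O(1)$): passing to the limit $\epsilon\to0$ gives $-\mathcal L(\overline y,D_y\phi(\overline y),D^2_y\phi(\overline y))\le 0$. Thus $y\mapsto\overline V(t,x,y)$ is a viscosity subsolution of $-\mathcal L w=0$ on $\re^3$, and it has at most linear growth by \eqref{usublinear} (it is in fact bounded in $y$ for each fixed $x$).

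Next I would invoke a Liouville-type rigidity: a bounded (or sublinear) viscosity subsolution $w$ of $-\mathcal L w\le 0$ on all of $\re^3$ must be constant. This can be proved by the same device already used twice in Section~3: compare $w$ with $w(\overline y)+\rho U_1$ where $U_1$ is the superlinear Lyapunov function satisfying $-\mathcal L U_1\ge \gamma U_1-\beta$ from \eqref{celliU}; since $w-\rho U_1\to-\infty$ at infinity, it attains an interior maximum, the strong maximum principle for the (hypoelliptic, hence locally smoothing) operator $\mathcal L$ forces $w-\rho U_1$ to be constant on every ball once $\rho$ is small, and letting $\rho\to0^+$ yields $w\le w(\overline y)$ for all $y$; running the symmetric argument with $\underline V$ and supersolutions gives the reverse inequality, so both semilimits are independent of $y$. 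Alternatively, one integrates the subsolution inequality against the invariant measure $\mu$ (using $\mathcal L^*\mu=0$ from Lemma~\ref{eqmu} and the decay $\int(y_1^4+y_2^4+y_3^2)\,d\mu<\infty$ from the Remark) to conclude that $w$ is $\mu$-a.e.\ constant, then upgrade by continuity.

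The main obstacle is the non-compactness of $\re^3_y$: one cannot simply extract maximum points of $V^\epsilon-\phi$ over $\re^3$, and the perturbed test function $\phi$ alone need not confine them. The cure is to combine $\phi$ with a coercive correction built from the Lyapunov function $U_1$ (as in \eqref{FIL2}) so that the maxima stay in a fixed compact set uniformly in $\epsilon$; one then needs the uniform bound $|V^\epsilon|\le C_0(1+|x|)$ of Proposition~\ref{Existence} to control the $x$-dependence while keeping $x$ essentially frozen. A secondary technical point is justifying that the mixed-derivative term $\epsilon^{-1/2}D^2_{xy}$ contributes nothing in the limit: this is immediate because the test function is chosen independent of $x$, so $D^2_{xy}\phi=0$, and the full coercive term $-\frac1\epsilon\mathcal L$ is the only one surviving after multiplication by $\epsilon$.
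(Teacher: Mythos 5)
Your overall plan coincides with the paper's: show that for fixed $(t_0,x_0)$ the slice $y\mapsto\overline V(t_0,x_0,y)$ is a BUSC viscosity subsolution of $-\mathcal L w=0$ on $\re^3$, and then invoke a Liouville theorem for that operator (the paper cites [MMT, Prop.\,3.1]). However, the sketch misidentifies the actual obstacle and leaves a real gap. Unboundedness in $y$ is not the difficulty here: since $\phi$ gives a \emph{strict} local maximum, one works throughout on a fixed compact ball $\overline{B(\bar y,\delta)}$, and no $\eta|y|^2$ or Lyapunov penalization is needed. The genuine difficulty, which you wave at as ``the usual perturbation terms,'' is that with a $y$-only test function the quantity $V^\epsilon(t',x',y')-\phi(y')$ has no reason to have a maximum localized near $(t_0,x_0)$ in the $(t,x)$-variables, so the half-relaxed semilimit machinery does not directly yield the sequence $(t_\epsilon,x_\epsilon,y_\epsilon)$ your argument requires. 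The paper fixes this in two sub-steps: it first proves that $\overline V(t,x,y)$, as a function of \emph{all three} variables, is a viscosity subsolution of $-\mathcal L=0$ (there $\psi$ does depend on $(t,x)$, and the $D^2_{xy}\psi/\sqrt\epsilon$ term appears but vanishes after multiplying the inequality by $\epsilon$ because $H$ is affine in $Z$), and only afterwards passes to $y$-only test functions via the auxiliary functions $\phi_\eta(t,x,y)=\phi(y)\bigl(1+\eta^{-1}(|x-x_0|^2+|t-t_0|^2)\bigr)$ from [BCD, Lemma II.5.17]. This reduction is the core of the argument and is missing from your proposal.

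Your two self-contained proposals for the Liouville step also both run into trouble with the mere upper semicontinuity of $\overline V$. The comparison with $\rho U_1$ followed by the strong maximum principle is precisely the device used in Theorem~\ref{TH33}, but there the functions are assumed $C^2$; for a BUSC viscosity subsolution you would need a strong maximum principle in the viscosity framework for the degenerate hypoelliptic operator $\mathcal L$, which is nontrivial and is exactly what the cited Liouville result of [MMT] packages. The alternative of integrating against $\mu$ does not work as stated: from $\mathcal L^*\mu=0$ one only obtains $\int(-\mathcal L w)\,d\mu=0$, hence $-\mathcal L w=0$ $\mu$-a.e., and that does not by itself force $w$ to be constant without a further rigidity argument.
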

\begin{proof}
Let us observe that $\overline V(t,x,y)$ and $\underline V(t,x,y)$ are respectively 
BUSC and BLSC. We prove that $\overline V(t,x,y)$ is independent of $y$; being similar, the proof for $\underline V(t,x,y)$ is omitted.

We claim that for $(t_0,x_0)\in(0,T)\times \re^n$ fixed, $\overline V (t_0,x_0, y)$ is a subsolution for $y\in\re^3$
to equation 
\begin{equation}\label{liouvi}
-tr(\sigma\sigma^T(y) D_{yy}V)-b(y)\cdot D_yV = 0.
\end{equation}
Assuming for the moment that is true, since $\overline V(t_0,x_0,y)$ is BUSC in $y$,
we can apply the Liouville theorem proved in \cite[Proposition 3.1]{MMT} to deduce that the function $\overline V(t_0,x_0,y)$ does not depend on $y$.

In order to prove that $\overline V (t_0,x_0, y)$ is a subsolution 
to equation \eqref{liouvi} we follow the same arguments as in Step 2 of \cite[Theorem 3.2]{BC}, which we write for the sake of completeness.

First of all we prove that 
$\overline V (t, x, y)$ is a subsolution 
to equation \eqref{liouvi} for $(t,x,y)\in(0,T)\times \re^3\times \re^3$. To do this we fix a point $(\overline t, \overline x, \overline y)$ and a smooth function $\psi$ such that 
$\overline V-\psi$ has a local strict maximum at $(\overline t, \overline x, \overline y)=\overline P$ in
$\overline{B(\overline P, r)}=\{(t, x, y):\vert (t, x, y)-(\overline t, \overline x, \overline y)\vert \leq r\}$, for some $r>0$.
Using the definition of the half relaxed semilimit it is possible to prove (see \cite{BCD}) that there exists 
$\epsilon_n\to 0$ and $(t_n, x_n, y_n)\in \overline{B(\overline P, r)}$ such that  $(t_n, x_n, y_n)\to (\overline t, \overline x, \overline y)$,
$(t_n, x_n, y_n)$ are maxima for $V^{\epsilon_n}-\psi$ in $\overline{B(\overline P, r)}$ and 
$V^{\epsilon_n}(t_n, x_n, y_n)\to \overline V(\overline t, \overline x, \overline y)$.
Since $V^{\epsilon_n}$ solves \eqref{HJBSP} then 
\begin{eqnarray*}
&&-\partial_t\psi+H\left(x_n,y_n,D_x\psi,D_{xx}^2\psi,\frac{D_{xy}^2\psi}{\sqrt{\epsilon_n}}\right)\\
&&-\frac1\epsilon_n
tr(\sigma(y_n)\sigma(y_n)^T D_{yy}\psi)-\frac1\epsilon_n b(y_n)\cdot D_y\psi
+a V^{\epsilon_n}(t_n, x_n, y_n)\leq 0
\end{eqnarray*}
Then
\begin{eqnarray*}
&&-tr(\sigma(y_n)\sigma(y_n)^T D_{yy}\psi)-b(y_n)\cdot D_y\psi
\leq \\
&&\epsilon_n
\left [\partial_t\psi-H\left(x_n,y_n,D_x\psi,D_{xx}^2\psi,\frac{D_{xy}^2\psi}{\sqrt\epsilon}\right)-
a V^{\epsilon_n}\right ]
\end{eqnarray*}
From the regularity of $\psi$, the continuity of $H$ (obtained from A1), A3), A5)) and the local uniform boundedness of 
$V^{\epsilon_n}$, the part in the brackets on the right hand side is uniformly bounded with respect to $n$
in $B(\overline P, r)$,  then passing to the limit as $\epsilon_n\to 0$ we get
$$
-tr(\sigma(\overline y)\sigma(\overline y)^T D_{yy}\psi)- b(\overline y)\cdot D_y\psi
\leq 0,
$$
i.e. 
$\overline V (t, x, y)$ is a subsolution 
to equation \eqref{liouvi} for $(t,x,y)\in(0,T)\times \re^n\times \re^3$.

We now show that, arguing as in \cite[Lemma II.5.17]{BCD}, for every fixed $(t_0, x_0)\in(0,T)\times \re^n$ the function $\overline V (t_0, x_0, y)$ is a subsolution to equation \eqref{liouvi}.
We fix $\overline y$ and $\phi(y)$, a smooth function such that $\overline V (t_0, x_0, y)-\phi(y)$ has a strict local maximum at $\overline y$ in $B(\overline y, \delta)$ and such that 
$\phi(y)\geq 1$ in $B(\overline y, \delta)$. Let us chose $\delta>0$ small enough s.t. $t_0-\delta>0$.
We define, for $\eta>0$ 
$$\phi_{\eta}(t,x,y)=\phi(y)\left( 1+\frac{|x-x_0|^2+|t-t_0|^2}{\eta}\right)$$
 and we consider 
$(t_{\eta}, x_{\eta}, y_{\eta})$ a maximum point of $\overline V -\phi_{\eta}$ in $\overline{B((t_0, x_0, \overline y),\delta)}$.
We remark that
\begin{equation}
\label{eq:max1}
\overline V(t_{\eta}, x_{\eta}, y_{\eta})-\phi_{\eta}(t_{\eta}, x_{\eta}, y_{\eta})\geq \overline V(t_0, x_0, \overline y)-\phi(\overline y)
\end{equation}
\begin{equation*}
\overline V(t_{\eta}, x_{\eta}, y_{\eta})-\phi( y_{\eta})\geq \overline V(t_0, x_0, \overline y)-\phi(\overline y)
\end{equation*}
and we can prove that, eventually passing to subsequences (as $\eta\to 0$) first that $(t_{\eta}, x_{\eta})\to (t_0, x_0)$, then that
$y_{\eta}\to \overline y$ using the strict maximum property of $\overline y$.\\
Using  \eqref{eq:max1} and the upper semicontinuity of $\overline V$
$$K_{\eta}:= \left( 1+\frac{|x_{\eta}-x_0|^2+|t_{\eta}-t_0|^2}{\eta}\right)\to K>0.$$
Now, using the fact that $\overline V(t,x,y)$ is a subsolution of \eqref{liouvi} in $(t_{\eta}, x_{\eta}, y_{\eta})$ we get 
$$
-tr(\sigma(y_{\eta})\sigma(y_{\eta})^T D^2_{yy}\phi_\eta)- b(y_{\eta})\cdot D_y\phi_\eta \leq 0,
$$
which gives, passing to the limit as $\eta\to 0$ 
$$
-tr(\sigma(\overline y)\sigma(\overline y)^T D^2_{yy}\phi)- b(\overline y)\cdot D_y\phi \leq 0.
$$
\end{proof}
\begin{remark}
Using \eqref{eq:max1} and the upper semicontinuity of $\overline V$ it is possible to prove that
$K_{\eta}\to 1$. This property in not used in our proof but can be useful in more general and nonlinear cases.
\end{remark}

\subsection{Step 2}
\begin{proposition}
 Under the assumptions A1)-A5), $\overline V$ and $\underline V$ are respectively a subsolution and a supersolution of the parabolic equation in~\eqref{EFFSP}.
\end{proposition}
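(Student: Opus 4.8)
The plan is to use the perturbed test function method with the corrector $w$ constructed in Theorem~\ref{TH33}, adapted to the unbounded fast variable by means of the superlinear Lyapunov function $U_1(y)=y_1^4+y_2^4+y_3^2$ that already appeared in the proof of Theorem~\ref{TH33} (it satisfies $-\mathcal L U_1\ge \gamma U_1-\beta$). I will prove the subsolution statement for $\overline V$; the supersolution statement for $\underline V$ is entirely symmetric. By Step~1 we already know $\overline V=\overline V(t,x)$, so fix $(\overline t,\overline x)\in(0,T)\times\re^n$ and a smooth $\phi=\phi(t,x)$ such that $\overline V-\phi$ has a strict local maximum at $(\overline t,\overline x)$; write $(\overline p,\overline X)=(D_x\phi,D_{xx}^2\phi)(\overline t,\overline x)$ and let $\lambda=\overline H(\overline x,\overline p,\overline X)=\int H(\overline x,y,\overline p,\overline X,0)\,d\mu(y)$ be the ergodic constant, with $w$ the associated corrector. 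The goal is to show $-\partial_t\phi(\overline t,\overline x)+\overline H(\overline x,\overline p,\overline X)+a\overline V(\overline t,\overline x)\le 0$.

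The main step is to build, for small $\beta>0$ (a parameter, not the Lyapunov constant — I will rename the Lyapunov constant), the perturbed test function
\[
\phi^\epsilon(t,x,y):=\phi(t,x)+\epsilon\, w(y)+\epsilon\,\theta\,U_1(y),
\]
where $\theta=\theta(\epsilon)\to 0$ slowly is inserted only to tame the behaviour of $w$ and its derivatives at infinity; then I localize. Because $V^\epsilon-\phi^\epsilon$ need not have a global max, I add the usual penalization in $(t,x)$ and restrict to a ball in $y$; however, the key point is that the Lyapunov term forces the maximizing sequence $y_\epsilon$ in the $y$-variable to stay in a fixed compact set, since along a maximum point $\mathcal L(\epsilon w+\epsilon\theta U_1)$ contributes $\epsilon(H(\cdot)-\lambda)+\epsilon\theta(\gamma U_1-\beta)$ up to sign, and the superlinear coercivity $\gamma U_1$ dominates the at-most-quadratic growth of the other terms. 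Plugging $\phi^\epsilon$ into the viscosity inequality for $V^\epsilon$ (valid in the doubled-variables sense via \cite[Theorem 3.2]{CIL} because $w\in C^{2,\alpha}_{loc}$ by Theorem~\ref{regw}, so no doubling in $y$ is even needed — one can test directly), the singular $\tfrac1\epsilon\mathcal L$ term cancels against $-\tfrac1\epsilon\mathcal L(\epsilon w)=-\mathcal L w = H(\overline x,y_\epsilon,\overline p,\overline X,0)-\lambda$ up to errors that vanish as $\epsilon\to0$ by the regularity of $\phi$ and of $w$ on the relevant compact set and by $\theta\to0$; the cross term $D^2_{xy}\phi^\epsilon/\sqrt\epsilon=\sqrt\epsilon\,(\theta D^2 U_1+\dots)$ is $O(\sqrt\epsilon)$ and drops out. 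Passing to the limit, using $y_\epsilon\to$ some $\overline y$ in a compact set, $x_\epsilon\to\overline x$, $t_\epsilon\to\overline t$, and $aV^{\epsilon}(t_\epsilon,x_\epsilon,y_\epsilon)\to a\overline V(\overline t,\overline x)$, yields $-\partial_t\phi(\overline t,\overline x)+H(\overline x,\overline y,\overline p,\overline X,0)+a\overline V(\overline t,\overline x)\le \lambda+o(1)$.

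The last issue is that this only gives the inequality with $H(\overline x,\overline y,\cdot)$ at one point $\overline y$, not with the average $\overline H=\lambda$. This is precisely where the corrector equation does the work: by construction $-\mathcal L w(y)+H(\overline x,y,\overline p,\overline X,0)=\lambda$ \emph{pointwise}, so after the cancellation the residual is exactly $-\partial_t\phi(\overline t,\overline x)+\lambda+aV^\epsilon+o(1)\le 0$, i.e. $-\partial_t\phi(\overline t,\overline x)+\overline H(\overline x,\overline p,\overline X)+a\overline V(\overline t,\overline x)\le 0$, with no spurious $\overline y$-dependence — the averaging is already encoded in $\lambda$. I expect the main obstacle to be the rigorous justification that the maximizing sequence $(y_\epsilon)$ stays in a fixed compact set uniformly in $\epsilon$: this requires carefully balancing the penalization parameters, the slow cutoff $\theta(\epsilon)$, the log-growth of $w$ from \eqref{wlog}, and the superlinear gain $\gamma U_1$ from \eqref{celliU}, so that the Lyapunov term strictly dominates at infinity; once this compactness is in hand the remaining limits are routine continuity arguments using A1), A3), A5) and \eqref{regH1}, together with Theorem~\ref{regw}.
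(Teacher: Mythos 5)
Your overall strategy---perturbed test function built from the corrector $w$ plus a Lyapunov term, with the pointwise cell identity $-\mathcal{L}w+H(\overline x,\cdot,\overline p,\overline X,0)=\lambda$ providing the cancellation---is the same as the paper's, and you correctly identify that the $C^{2,\alpha}_{loc}$ regularity of $w$ (Theorem~\ref{regw}) lets you test directly. However, there is a genuine gap in your parameter scheme. You couple the Lyapunov weight to $\epsilon$, taking $\theta=\theta(\epsilon)\to 0$, and then assert that ``the Lyapunov term forces the maximizing sequence $y_\epsilon$ to stay in a fixed compact set.'' This is not true for a coupled scheme: after the cell cancellation the Lyapunov term contributes $-\theta\,\mathcal{L}U_1(y_\epsilon)\geq\theta(\gamma U_1(y_\epsilon)-\beta)$, and since all remaining terms in the viscosity inequality are bounded uniformly in $y$ (they have no quadratic growth in $y$---$H(\overline x,y,\overline p,\overline X,0)$ is bounded by A3, $V^\epsilon$ is equibounded, the cross term vanishes identically because $\phi^\epsilon$ is additively separated), a contradiction only forces $\theta\,U_1(y_\epsilon)$ to stay bounded, i.e.\ $|y_\epsilon|\lesssim\theta^{-1/4}$. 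That compact set \emph{blows up} as $\theta\to0$, so your claimed extraction $y_\epsilon\to\overline y$ and the subsequent limit $aV^\epsilon(t_\epsilon,x_\epsilon,y_\epsilon)\to a\overline V(\overline t,\overline x)$ (which in the paper uses the subsequence $y_{\epsilon_n,\eta}\to\tilde y_\eta$) are not justified. You flag this as ``the main obstacle'' without resolving it.

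The paper sidesteps the issue entirely by decoupling the two parameters: it keeps the Lyapunov weight $\eta$ \emph{fixed} and takes $\epsilon\to0$ first (Claims~\ref{claimbddtx}--\ref{convmax}), at which stage $y_{\epsilon,\eta}$ is bounded uniformly in $\epsilon$ for each fixed $\eta>0$ precisely because $\eta(-\mathcal{L}\chi)(y_{\epsilon,\eta})\to+\infty$ while everything else stays bounded. Only afterwards does it send $\eta\to0$, and at that stage (Claim~\ref{claimend}) the points $\tilde y_\eta$ are allowed to escape to infinity: the Lyapunov contribution is then simply nonnegative and can be dropped. If you want to keep a coupled $\theta(\epsilon)$, you would need a more careful quantitative balance: choose $\theta$ decaying slowly enough (say $\theta=|\log\epsilon|^{-1}$) so that the log-growth of $w$ from \eqref{wlog} still gives $\epsilon\,w(y_\epsilon)\to0$ on the sublevel set $|y_\epsilon|\lesssim\theta^{-1/4}$, and then argue that $V^\epsilon$ converges correctly at the max point \emph{without} extracting a convergent subsequence of $y_\epsilon$; this is possible but requires rewriting the argument and is strictly more delicate than the paper's sequential limits. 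Your choice of the quartic $U_1$ instead of the paper's quadratic $\chi=|y|^2$ and your $(t,x)$-penalization instead of the paper's coercive modification of the test function $\psi$ are both harmless variants.
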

\begin{proof}
We prove that $\overline V$ is a viscosity subsolution of \eqref{EFFSP} in $  ]0,T[\times \re^n$. The proof that $\underline V$ is a viscosity supersolution is analogous, so we shall omit it.

We take a smooth test function $\psi(t,x)$ such that $(\overline t, \overline x)\in ]0,T[\times \re^n$ is a strict local maximum point for
$\overline V -\psi$. We have to prove that 
$$
-\partial_t\psi(\overline t, \overline x)+
\overline H\left(\overline x, D_x\psi(\overline t, \overline x), D_{xx}^2\psi(\overline t, \overline x)\right)+a\, \psi (\overline t, \overline x)\leq0.$$
Without any loss of generality we can assume that:
\begin{enumerate}
\item $\overline V(\overline t, \overline x)=\psi(\overline t, \overline x)$;
\item $\psi$ is coercive in $x$ uniformly in t, i.e.
\begin{equation}
\label{cresce3}
 \lim_{\vert x \vert \rightarrow \infty} \inf_{t\in [0,T]}\psi(t,x)=+\infty;
\end{equation}
\item there holds
\begin{equation}
\label{bordoT}
\ds\inf_{x\in \re^n}\psi(\frac{\bar t}{2},x)>M+1, \quad \inf_{x\in \re^n}\psi(\frac{\bar t+T}{2},x)>M+1
\end{equation}
 where $M$ is a constant such that $\vert V^\epsilon(t,x,y)\leq M$;
\item $\ds\sup_{(t,x)\in K}|\partial_t\psi(t,x)|\leq C_K$ for any $K$ compact in $[0,T]\times \re^n$.
\end{enumerate}

For any fixed $\eta\in]0,1]$, let us consider now the "perturbed test function":
$$\psi_{\epsilon\eta}(t,x, y):=\psi(t,x)+\epsilon(w(y)+\eta\chi(y))$$
where $w(y)$ is the viscosity solution of the cell problem \eqref{cell} founded in Theorem \ref{TH33} 
associated to $(\overline x,D_x\psi(\overline t, \overline x), D_{xx}^2\psi(\overline t, \overline x))$
and $\chi(y)$ 
is the Lyapunov function 
\begin{equation}\label{Lyap}
\chi(y)=y_1^2+y_2^2+y_3^2.
\end{equation}
Note that, from \eqref{wlog} and the definition of $\chi$ in \eqref{Lyap}, we have
\begin{equation}\label{cresce}
w(y)+\eta\chi(y)\to +\infty,\ \mbox{if}\ |y|\to +\infty.
\end{equation}
and there exists a constant $k_0$ independent of $\eta$ such that
\begin{equation}\label{wmin}
w(y)+\eta\chi(y)\geq - k_0(1+\log(\eta)).
\end{equation}
Let consider the function 
\[
\Psi(t,x,y):=V^\epsilon(t,x,y)-\psi_{\epsilon\eta}(t,x, y)
\]
Thanks to the equi-boundedness of $V^\epsilon$, \eqref{cresce} and \eqref{cresce3} we have:
\begin{equation*}
\Psi(t,x,y)\to -\infty,\ \mbox{if}\ (x,y)\to +\infty 
\end{equation*}
and there exists a point $\ds(t_{\epsilon, \eta},x_{\epsilon, \eta},y_{\epsilon, \eta})\in [\frac{\bar t}{2},\frac{\bar t+T}{2}]\times \re^n\times \re^3$ which is a global maximum point of $\Psi$  in $\ds [\frac{\bar t}{2},\frac{\bar t+T}{2}]\times \re^n\times \re^3$.

\begin{claim}
\label{claimbddtx}
$\ds(t_{\epsilon, \eta},x_{\epsilon, \eta})$ is bounded uniformly in $\epsilon$.
\end{claim}

The points $t_{\epsilon,\eta}$ are obviously bounded.
Now using the maximum property of $\ds(t_{\epsilon, \eta},x_{\epsilon, \eta},y_{\epsilon, \eta})$, we have:
\[
V^\epsilon(t_{\epsilon, \eta},x_{\epsilon, \eta},y_{\epsilon, \eta})-\psi_{\epsilon\eta}(t_{\epsilon, \eta},x_{\epsilon, \eta},y_{\epsilon, \eta})\geq    V^\epsilon(\bar t, 0, 0)-\psi(\bar t, 0)-\epsilon(w(0)+\eta\chi(0);
\]
then from \eqref{wmin} 
\begin{multline*}
K\geq V^\epsilon(t_{\epsilon, \eta},x_{\epsilon, \eta},y_{\epsilon, \eta})-V^\epsilon(\bar t, 0, 0)+\psi(\bar t, 0)+\epsilon(w(0)+\eta\chi(0))\geq   \\ 
\psi(t_{\epsilon, \eta},x_{\epsilon, \eta})+\epsilon(w(y_{\epsilon, \eta})+\eta\chi(y_{\epsilon, \eta}))
\geq \psi(t_{\epsilon, \eta},x_{\epsilon, \eta})-\epsilon k_0(1+\log(\eta))
\end{multline*}
and we end the proof of Claim~\ref{claimbddtx} using and \eqref{cresce3}.

\begin{claim}
\label{claimtbord}
If $\ds t_{\epsilon, \eta}=\frac{\bar t}{2}$ or $\ds t_{\epsilon, \eta}=\frac{\bar t+T}{2}$ , then for any $(t', x', y')\in  [\frac{\bar t}{2},\frac{\bar t+T}{2}]\times \re^n\times \re^3$ 
\[
\Psi(t', x', y')\leq -1+\epsilon k_0(1+\log(\eta)).
\]
\end{claim}

Thanks to  \eqref{wmin}
\[
\Psi(t', x', y')\leq V^\epsilon(t_{\epsilon, \eta},x_{\epsilon, \eta},y_{\epsilon, \eta})-\psi(t_{\epsilon, \eta},x_{\epsilon, \eta})+\epsilon k_0(1+\log(\eta)).
\]
Using now \eqref{bordoT} and the definition of $M$
\begin{eqnarray*}
\Psi(t', x', y')&\leq& V^\epsilon(t_{\epsilon, \eta},x_{\epsilon, \eta},y_{\epsilon, \eta})-(M+1)+\epsilon k_0(1+\log(\eta))\\
&\leq&  -1+\epsilon k_0(1+\log(\eta)).
\end{eqnarray*}

\begin{claim}\label{eqVep}
If $\ds t_{\epsilon, \eta}\in]\frac{\bar t}{2},\frac{\bar t+T}{2}[$ , then
\begin{multline}\label{claim3}
-\partial_t\psi(t_{\epsilon, \eta},x_{\epsilon, \eta})+
\bar H\left(\overline x, D_x\psi(\overline t, \overline x), D_{xx}^2\psi(\overline t, \overline x)\right)\\
-\eta {\cal {L}}(y_{\epsilon, \eta},D_y\chi(y_{\epsilon, \eta}), D^2_{yy}\chi(y_{\epsilon, \eta}))+{\cal{F_{\epsilon, \eta}}}
+a V^\epsilon(t_{\epsilon, \eta},x_{\epsilon\eta},y_{\epsilon, \eta})\leq 0,
\end{multline}
where 
\begin{multline*}
{\cal{F_{\epsilon, \eta}}}=H(x_{\epsilon, \eta},y_{\epsilon, \eta},D_x\psi(t_{\epsilon, \eta},x_{\epsilon, \eta}),D_{xx}^2\psi(t_{\epsilon, \eta},x_{\epsilon, \eta}),0)\\-H(\overline x,y_{\epsilon, \eta},D_x\psi(\overline t, \overline x),D_{xx}^2\psi(\overline t, \overline x),0).
\end{multline*}
\end{claim}

By definition of viscosity subsolution of \eqref{HJBSP} and using the regularity of $w$ (proved in 
Theorem \ref{regw}) , $\psi$ and $\chi$
:
\begin{multline*}
-\partial_t\psi(t_{\epsilon, \eta},x_{\epsilon, \eta})+H(x_{\epsilon, \eta},y_{\epsilon, \eta},D_x\psi(t_{\epsilon, \eta},x_{\epsilon, \eta}),D_{xx}^2\psi(t_{\epsilon, \eta},x_{\epsilon, \eta}),0)\\
 - {\cal L}(y_{\epsilon, \eta}, D_yw(y_{\epsilon, \eta}),D^2_{yy}w(y_{\epsilon, \eta}))-\eta {\cal L}(y_{\epsilon, \eta}, D_y\chi(y_{\epsilon, \eta}),D^2_{yy}\chi(y_{\epsilon, \eta}))\\
+a V^\epsilon(t_{\epsilon, \eta},x_{\epsilon\eta},y_{\epsilon, \eta})\leq 0 
\end{multline*}
Now we use \eqref{cell} for $y=y_{\epsilon, \eta}$
\begin{multline*}
-\partial_t\psi(t_{\epsilon, \eta},x_{\epsilon, \eta})+H(x_{\epsilon, \eta},y_{\epsilon, \eta},D_x\psi(t_{\epsilon, \eta},x_{\epsilon, \eta}),D_{xx}^2\psi(t_{\epsilon, \eta},x_{\epsilon, \eta}),0)\\
 + {\overline H}(\overline x,D_x\psi(\overline t, \overline x), D_{xx}^2\psi(\overline t, \overline x))-H(\overline x,y_{\epsilon, \eta},D_x\psi(\overline t, \overline x),D_{xx}^2\psi(\overline t, \overline x),0)\\
-\eta {\cal L}(y_{\epsilon, \eta}, D_y\chi(y_{\epsilon, \eta}),D^2_{yy}\chi(y_{\epsilon, \eta}))
+a V^\epsilon(t_{\epsilon, \eta},x_{\epsilon\eta},y_{\epsilon, \eta})\leq 0 
\end{multline*}
which is equivalent to \eqref{claim3}.

\begin{claim}
\label{claimybdd}
If $\ds t_{\epsilon, \eta}\in]\frac{\bar t}{2},\frac{\bar t+T}{2}[$ , then $\{y_{\epsilon, \eta}\}_{\epsilon}$ is uniformly bounded.
\end{claim}

By contradiction: let us assume that there exists a sequence $\{\epsilon_n\}_n$ with $\ds \epsilon_n\rightarrow 0$ such that
\[
 t_{\epsilon_n, \eta}\in]\frac{\bar t}{2},\frac{\bar t+T}{2}[, \quad |y_{\epsilon_n, \eta}|\rightarrow \infty \hbox{  for }n\rightarrow \infty
\]
Then, there holds, calling  $y_{\epsilon_n, \eta}=(y_{\epsilon_n, \eta, 1},y_{\epsilon_n, \eta, 2}, y_{\epsilon_n, \eta, 3})$:
\begin{multline}\label{claimcontr1}
-{\cal L}(y_{\epsilon_n, \eta}, D_y\chi(y_{\epsilon_n, \eta}),D_{yy}\chi(y_{\epsilon_n, \eta}))=\\ 2(-2+(k_1-4)y^2_{\epsilon_n, \eta,1}+(k_2-4)y^2_{\epsilon_n, \eta,2}+k_3y^2_{\epsilon_n, \eta,3})\rightarrow \infty.
\end{multline}
Moreover, by Claim~\ref{claimbddtx} and
~\ref{regH1}
we get
\[\vert {\cal{F}}_{\epsilon_n, \eta}\vert \leq K(\eta)=K.\]

Coming back to \eqref{claim3}, using again Claim~\ref{claimbddtx} and the uniform boundedness of $V^\epsilon$ we have a contradiction.

\begin{claim}\label{convmax}
There holds
\begin{equation*}
(t_{\epsilon, \eta},x_{\epsilon, \eta})\rightarrow ( \bar t,\bar x) \quad \hbox{ for } \epsilon\rightarrow 0.
\end{equation*}
\end{claim}

There exists $\ds ( \tilde t,\tilde x)\in[\frac{\bar t}{2},\frac{\bar t+T}{2}]\times \re^n$ such that (possibly passing to a subsequence)
\[
(t_{\epsilon, \eta},x_{\epsilon, \eta})\rightarrow ( \tilde t,\tilde x) \qquad \textrm{as } \epsilon\to 0.
\]
By definition of $\ds(t_{\epsilon, \eta},x_{\epsilon, \eta},y_{\epsilon, \eta})$ we have: $\ds \forall (t', x', y')\in[\frac{\bar t}{2},\frac{\bar t+T}{2}]\times \re^n\times \re^3$
\begin{multline}\label{claimvepmax}
V^\epsilon(t_{\epsilon, \eta},x_{\epsilon, \eta},y_{\epsilon, \eta})-\psi(t_{\epsilon, \eta},x_{\epsilon, \eta})-\epsilon(w(y_{\epsilon, \eta})+\eta\chi(y_{\epsilon, \eta}))\geq   \\
 V^\epsilon(t', x', y')-\psi(t', x')-\epsilon(w(y')+\eta\chi(y'))
\end{multline}
Let $ \ds (t, x, y)\in[\frac{\bar t}{2},\frac{\bar t+T}{2}]\times \re^n\times \re^3$. Passing to $\ds\limsup_{(\epsilon, t', x', y')\rightarrow (0, t, x, y)}$  in the previous inequality \eqref{claimvepmax}:
\begin{multline}\label{claimvmax}
\limsup_{(\epsilon, t', x', y')\rightarrow (0, t, x, y)}\left[V^\epsilon(t_{\epsilon, \eta},x_{\epsilon, \eta},y_{\epsilon, \eta})-\psi(t_{\epsilon, \eta},x_{\epsilon, \eta})-\epsilon(w(y_{\epsilon, \eta})+\eta\chi(y_{\epsilon, \eta}))\right]\geq   \\
 \bar V(t, x)-\psi(t, x)
\end{multline}
Moreover there is a sequence $\epsilon_n\rightarrow 0$ such that 
\begin{multline}\label{claimsubseq}
\limsup_{(\epsilon, t', x', y')\rightarrow (0, t, x, y)}\left[V^\epsilon(t_{\epsilon, \eta},x_{\epsilon, \eta},y_{\epsilon, \eta})-\psi(t_{\epsilon, \eta},x_{\epsilon, \eta})-\epsilon(w(y_{\epsilon, \eta})+\eta\chi(y_{\epsilon, \eta}))\right]=\\
\lim_{n\rightarrow\infty}
\left[V^{\epsilon_n}(t_{\epsilon_n, \eta},x_{\epsilon_n, \eta},y_{\epsilon_n, \eta})-\psi(t_{\epsilon_n, \eta},x_{\epsilon, \eta})-\epsilon_n(w(y_{\epsilon, \eta})+\eta\chi(y_{\epsilon_n, \eta}))\right]
\end{multline}
We split the argument according to the case that $t_{\epsilon_n, \eta}$ belongs to the interior or to the boundary of $[\frac{\bar t}{2},\frac{\bar t+T}{2}]$.
\begin{itemize}
\item If $\ds t_{\epsilon_n, \eta}=\frac{\bar t}{2}$ or $\ds t_{\epsilon_n, \eta}=\frac{\bar t+T}{2}$ we apply claim \ref{claimtbord} 
\begin{multline*}
V^{\epsilon_n}(t_{\epsilon_n, \eta},x_{\epsilon_n, \eta},y_{\epsilon_n, \eta})-\psi(t_{\epsilon_n, \eta},x_{\epsilon, \eta})-\epsilon_n(w(y_{\epsilon, \eta})+\eta\chi(y_{\epsilon_n, \eta})=\\ \Psi(t_{\epsilon_n, \eta},x_{\epsilon_n, \eta},y_{\epsilon_n, \eta})\leq -1+\epsilon_n k_0(1+\log(\eta))
\end{multline*}
If $\ds t_{\epsilon_n, \eta}=\frac{\bar t}{2}$ or $\ds t_{\epsilon_n, \eta}=\frac{\bar t+T}{2}$ for an infinite sequence of indices $\epsilon_n$ we have:
\[
-1\geq \bar V(t, x)-\psi(t, x)
\]
and this is a contradiction since , for $(t,x)=(\bar t, \bar x)$, $\bar V(\bar t, \bar x)-\psi(\bar t, \bar x)=0$.
\item If $\ds t_{\epsilon, \eta}\in]\frac{\bar t}{2},\frac{\bar t+T}{2}[$, then 
by Claim \ref{claimybdd} $\{y_{\epsilon, \eta}\}_{\epsilon}$ is uniformly bounded and we can assume (possibly passing to a subsequence) that 
\begin{equation*}
y_{\epsilon_n,\eta}\rightarrow \tilde y_\eta
\end{equation*}
Using \eqref{claimvmax}, \eqref{claimsubseq} and the upper-semicontinuity of $\bar V$
\begin{eqnarray*}
 \bar V(\tilde t, \tilde x)-\psi(\tilde t, \tilde x)&\geq&
\limsup_{(\epsilon', t', x', y')\rightarrow (0, \tilde t, \tilde x, \tilde y_\eta)}\left[V^{\epsilon'}( t', x', y')\right.\\ &&\qquad-\left. \psi( t', x')-\epsilon'(w(y')+\eta\chi(y'))\right]\\ &\geq& \lim_{n\rightarrow\infty}\left[
V^{\epsilon_n}(t_{\epsilon_n, \eta},x_{\epsilon_n, \eta},y_{\epsilon_n, \eta})\right.
\\ &&\qquad \left.-\psi(t_{\epsilon_n, \eta},x_{\epsilon, \eta})-\epsilon_n(w(y_{\epsilon, \eta})+\eta\chi(y_{\epsilon_n, \eta}))\right]\\ &\geq & \bar V(t, x)-\psi(t, x).
\end{eqnarray*}
Using the strict maximum property of $(\bar t, \bar x)$, we get  $(\bar t, \bar x)=(\tilde t, \tilde x)$.
\end{itemize}
Let us remark that the previous inequalities imply also:
\begin{equation}
\label{VepVbar}
\lim_{n\rightarrow\infty}
V^{\epsilon_n}(t_{\epsilon_n, \eta},x_{\epsilon_n, \eta},y_{\epsilon_n, \eta})=\bar V(\bar t, \bar x)
\end{equation}

\begin{claim} There holds
\begin{multline}\label{claim7}
-\partial_t\psi(\overline t, \overline x)+
\overline H\left(\overline x, D_x\psi(\overline t, \overline x), D_{xx}^2\psi(\overline t, \overline x)\right)\\
-\eta {\cal {L}}(\tilde y_\eta,D_y\chi(\tilde y_\eta), D^2\chi(\tilde y_\eta))
+a\bar V(\bar t, \bar x) \leq 0.
\end{multline}
\end{claim}

Using Claim \ref{eqVep}, we get
\begin{multline*}
-\partial_t\psi(t_{\epsilon_n, \eta},x_{\epsilon_n, \eta})+
\overline H\left(\overline x, D_x\psi(\overline t, \overline x), D_{xx}^2\psi(\overline t, \overline x)\right)\\
-\eta {\cal {L}}(\tilde y_\eta,D_y\chi(y_{\epsilon_n, \eta}), D^2_{yy}\chi(y_{\epsilon_n, \eta}))+{\cal{F}}_{\epsilon_n, \eta}
+a V^{\epsilon_n}(t_{\epsilon_n, \eta},x_{\epsilon_n,\eta},y_{\epsilon_n, \eta})\leq 0.
\end{multline*}
Thanks to the regularity properties of $H$ (see \eqref{regH1}) and $\psi$, it is easy to get
\[
\ds {\cal{F}}_{\epsilon_n, \eta}\rightarrow 0
\]
From \eqref{VepVbar}, the statement follows easily.

\begin{claim} There holds
\label{claimend}
\begin{equation*}
-\partial_t\psi(\overline t, \overline x)+
\overline H\left(\overline x, D_x\psi(\overline t, \overline x), D_{xx}^2\psi(\overline t, \overline x)\right)
+a\bar V(\bar t, \bar x) \leq 0.
\end{equation*}
\end{claim}

We split the argument according to the fact that $\{\tilde y_\eta\}_\eta$ is uniformly bounded or not.
If $\{\tilde y_\eta\}_\eta$ is uniformly bounded, then passing to the limit as $\eta\to 0$  in inequality~\eqref{claim7} we get the statement.
If $\{\tilde y_\eta\}_\eta$ is unbounded, eventually passing to a subsequence, we can assume that $|\tilde y_\eta|\to +\infty$ as $\eta\to 0$. Arguing as in~\eqref{claimcontr1}, we get
\[
-{\cal L}(\tilde y_\eta, D_y \chi(\tilde y_\eta), D^2_{yy}\chi(\tilde y_\eta))\to +\infty \qquad \textrm{as }\eta \to0.
\]
In particular, for $\eta$ sufficiently small, there holds
\[
-{\cal L}(\tilde y_\eta, D_y \chi(\tilde y_\eta), D^2_{yy}\chi(\tilde y_\eta))\geq0.
\]
Replacing this inequality in relation~\eqref{claim7}, we get the statement.

\noindent{\bf Conclusion of the proof.} By the arbitrariness of the test function~$\psi$ and of the point $(\overline t, \overline x)$, we obtain that $\overline V$ is a (viscosity) subsolution of the parabolic equation in~\eqref{EFFSP}.
\end{proof}

\subsection{Step 3}
\begin{proposition}
There holds
$$\overline V(T,x)\leq \overline g(x)\leq \underline V(T,x) \quad\forall x\in \re^n.$$
\end{proposition}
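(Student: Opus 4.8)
The plan is to construct explicit barriers for $V^\epsilon$ near the final time $t=T$ out of the ergodic behaviour of the fast variable (an initial--layer construction in the spirit of Alvarez--Bardi) and then to invoke the comparison principle of \cite{DLL}. It suffices to prove $\overline V(T,x_0)\le\overline g(x_0)$ for an arbitrary $x_0\in\re^n$, the inequality $\overline g(x_0)\le\underline V(T,x_0)$ being entirely symmetric. I would fix $\rho>0$ and set $g_\rho(y):=\sup_{|x-x_0|\le\rho}g(x,y)$: by A4) this is a bounded continuous function of $y$, with $\|g_\rho\|_\infty\le C_g(1+|x_0|+\rho)$, and $g_\rho(y)\downarrow g(x_0,y)$ pointwise as $\rho\to0^+$, whence $\int_{\re^3}g_\rho\,d\mu\to\overline g(x_0)$ by dominated convergence (recall $\mu$ is a probability measure). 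Let $\theta_\rho(s,y)$ solve $\partial_s\theta_\rho-{\cal L}\theta_\rho=0$ in $(0,\infty)\times\re^3$ with $\theta_\rho(0,\cdot)=g_\rho$; then $\theta_\rho\in C^\infty((0,\infty)\times\re^3)$ by hypoellipticity, $\|\theta_\rho(s,\cdot)\|_\infty\le\|g_\rho\|_\infty$ by the maximum principle, and $\theta_\rho(s,\cdot)\to\overline g_\rho:=\int_{\re^3}g_\rho\,d\mu$ locally uniformly in $y$ as $s\to+\infty$ by the stabilization result of \cite{MMT} (the very one used to define $\overline g$).

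Next I would introduce the perturbed datum $\Phi^\epsilon(t,x,y):=\theta_\rho\big(\frac{T-t}{\epsilon},y\big)+C_1(T-t)+C_2\,\varphi(x)$, where $\varphi\in C^2(\re^n)$ is nonnegative, $\varphi(x_0)=0$, $\varphi(x)=|x-x_0|^2$ on $\{|x-x_0|\le\rho\}$, and $\varphi$ grows linearly at infinity. The point of using $\theta_\rho$ is that it annihilates the singular terms of \eqref{HJBSP}: since $\partial_t\big(\theta_\rho(\frac{T-t}{\epsilon},y)\big)=-\frac1\epsilon\partial_s\theta_\rho=-\frac1\epsilon{\cal L}\theta_\rho$ and $D^2_{xy}\Phi^\epsilon\equiv0$, the left-hand side of the equation in \eqref{HJBSP} evaluated at $\Phi^\epsilon$ reduces to $C_1+H(x,y,C_2D\varphi(x),C_2D^2\varphi(x),0)+a\Phi^\epsilon$. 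By A3) and A5) this $H$-term is bounded below by $-c(C_2)-C_f(1+|x|)$, while $a\Phi^\epsilon\ge aC_2\varphi(x)-a\|g_\rho\|_\infty$ grows at least linearly in $|x-x_0|$; hence, choosing first $C_2$ and then $C_1$ large (depending on $\rho$, $x_0$ and the data, not on $\epsilon$), $\Phi^\epsilon$ is a classical, hence viscosity, supersolution of the equation in \eqref{HJBSP} on $(0,T)\times\re^n\times\re^3$. Moreover $\Phi^\epsilon(T,x,y)=g_\rho(y)+C_2\varphi(x)\ge g(x,y)$ everywhere: on $\{|x-x_0|\le\rho\}$ because $g_\rho(y)\ge g(x,y)$ and $\varphi\ge0$, and on $\{|x-x_0|>\rho\}$ because $C_2$ is large and $g$ is sublinear in $x$ (A4)).

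With the barrier at hand I would conclude as follows. Since $V^\epsilon$ is a subsolution and $\Phi^\epsilon$ a supersolution of \eqref{HJBSP}, $V^\epsilon(T,\cdot,\cdot)\le\Phi^\epsilon(T,\cdot,\cdot)$, and both functions grow at most linearly in $(x,y)$, the comparison principle of \cite{DLL} (already invoked in Proposition \ref{Existence}) gives $V^\epsilon\le\Phi^\epsilon$ on $(0,T)\times\re^n\times\re^3$. Passing to the relaxed upper limit as $\epsilon\to0^+$, and using that $\frac{T-t''}{\epsilon}\to+\infty$ whenever $t''\to t'<T$ together with the local uniform convergence of $\theta_\rho(s,\cdot)$, I get $\overline V(t',x')\le\overline g_\rho+C_1(T-t')+C_2\varphi(x')$ for all $t'<T$ and all $x'$ (here $\overline V$ is $y$-independent for $t'<T$, by Lemma \ref{Vindy}). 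Since $\overline V(T,x_0)=\limsup_{t'\to T^-,\,x'\to x_0}\overline V(t',x')$ and $\varphi(x_0)=0$, letting $t'\to T^-$, $x'\to x_0$ gives $\overline V(T,x_0)\le\overline g_\rho$ for every $\rho>0$; letting $\rho\to0^+$ gives $\overline V(T,x_0)\le\overline g(x_0)$. For the reverse inequality one repeats everything with $g^\rho(y):=\inf_{|x-x_0|\le\rho}g(x,y)$ and the subsolution $\theta^\rho\big(\frac{T-t}{\epsilon},y\big)-C_1(T-t)-C_2\varphi(x)$.

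The hard part is the design of the barrier. One must use the \emph{parabolic} cell problem for the fast variable, together with its stabilization as $s\to+\infty$, precisely in order to cancel the $O(1/\epsilon)$ part of the operator, and one must tune the penalization $\varphi$ so that simultaneously $\Phi^\epsilon$ is a supersolution on all of $\re^n\times\re^3$, it dominates $g$ at $t=T$, and it stays within the growth class in which \cite{DLL} gives comparison. Everything else — the passage to the semirelaxed limits and the final limit $\rho\to0^+$ — is routine.
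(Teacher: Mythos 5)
Your proof is correct, but it takes a genuinely different route from the paper's. The paper localizes in $x$: it works on the cylinder $Q^r\times\re^3=(T-r,T)\times B_r(\bar x)\times\re^3$, builds a supersolution $\psi^\epsilon(t,x,y)=w^r\big(\frac{T-t}{\epsilon},y\big)+\psi_0(x)+C_1(T-t)$ where $\psi_0$ is large on $\partial B_r(\bar x)$ so that the lateral boundary condition $\Psi=M$ holds, and then applies a comparison principle on that \emph{bounded} (in $x$) domain. You instead work globally in $x$, replacing $\psi_0$ by a penalization $\varphi$ that is quadratic near $x_0$ but \emph{grows linearly at infinity}; the linear growth of $\varphi$ simultaneously dominates the linear growth of $f$ in the supersolution inequality and the linear growth of $g$ in the terminal inequality $\Phi^\epsilon(T,\cdot,\cdot)\ge g$, so the comparison can be run directly on $(0,T)\times\re^n\times\re^3$ via \cite{DLL}, which the paper has already invoked for well-posedness. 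The trade-off: the paper's localized construction relies only on the local modulus of continuity of $g$ and on a comparison principle on a bounded $x$-domain, whereas yours leans on the global comparison principle in the linear-growth class but avoids dealing with an IBVP altogether.

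A second, smaller departure is in the final limit $\rho\to0^+$. The paper compares $w^r$ with the auxiliary solution $w'$ having initial datum $g(\bar x,\cdot)$, using a modulus $\omega(r)$ for which it implicitly needs the oscillation of $g(\cdot,y)$ over $B_r(\bar x)$ to be bounded \emph{uniformly in $y$}; assumption (A4) gives only joint continuity, so this step requires a small extra uniformity hypothesis. You sidestep this: you observe that $g_\rho(y)\downarrow g(x_0,y)$ pointwise by continuity and conclude $\int g_\rho\,d\mu\to\overline g(x_0)$ by dominated convergence (using that $\mu$ is a finite measure and $g_\rho$ is uniformly bounded). This is a cleaner and slightly more robust way to remove the $\rho$-regularization. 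Both arguments are sound; yours is arguably shorter once the global comparison is granted.

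One point you should spell out when writing this up: the choice of $\varphi$ must guarantee $\inf_{|x-x_0|\ge\rho}\varphi>0$ (which your quadratic-near-$x_0$ profile does, with value $\rho^2$ at $|x-x_0|=\rho$), and this forces $C_2$ to blow up like $\rho^{-2}$ as $\rho\to0^+$. This is harmless because you send $t'\to T^-$, $x'\to x_0$ \emph{before} sending $\rho\to0^+$, and the $C_1(T-t')$ and $C_2\varphi(x')$ terms vanish in that inner limit regardless of how large $C_1,C_2$ are; but the order of limits is essential and worth stating explicitly.
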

\begin{proof}
We shall prove only the former inequality since the latter is analogous. We first recall from Lemma~\ref{Vindy} that, for $t>0$, the function~$\overline V$ is independent of~$y$; hence, also $\overline V(T,x,y)$ is independent of~$y$.
Fix $\bar x\in \re^n$; for every $r>0$ sufficiently small, we define
\begin{equation*}
g^r(y):=\sup_{|x-\bar x|\leq r}g(x,y)
\end{equation*}
and we observe that assumption (A4) ensures that $g^r$ is a continuous bounded function with
\begin{equation*}
|g^r(y)-g(x,y)|\leq \omega(r)\qquad \forall x\in B_r(\bar x).
\end{equation*}
This implies that 
\begin{equation}\label{stab31}
-\omega(r)+g^r(y)\leq g(\bar x,y)\leq \omega(r)+g^r(y) \qquad \forall y\in\re^3.
\end{equation}
We also introduce the parabolic Cauchy problem
\begin{equation}\label{stab1}
\left\{\begin{array}{ll}
\partial_t w^r- tr(\sigma\sigma^T D_{yy}^2 w^r)-b\cdot D_y w^r=0&\qquad\textrm{in }(0,\infty)\times \re^3\\
w^r(0,y)=g^r(y)&\qquad\textrm{on } \re^3;
\end{array}\right.
\end{equation}
by standard arguments, it admits exactly one bounded solution.
On the other hand, let us also consider the problem
\begin{equation}\label{stab2}
\left\{\begin{array}{ll}
\partial_t w'- tr(\sigma\sigma^T D_{yy}^2 w')-b\cdot D_y w'=0&\qquad\textrm{in }(0,\infty)\times \re^3\\
w'(0,y)=g(\bar x,y)&\qquad\textrm{on } \re^3;
\end{array}\right.
\end{equation}
we recall from \cite[Theorem4.2]{MMT} that there holds $\ds\lim_{t\to\infty}w'(t,y)=\overline g(\bar x)$ locally uniformly in~$y$; in particular, for every $\eta>0$ and $R>1$, there exists $\tau>0$ such that
\begin{equation}\label{stab4}
|w'(t,y)-\overline g(\bar x)|\leq \eta \qquad \forall (t,y)\in (\tau,\infty)\times B_R(0).
\end{equation}
Moreover, by relation~\eqref{stab31}, one can easily show that $w^r(t,y) \pm \omega (r)$ are respectively a supersolution and a subsolution to problem~\eqref{stab2}; hence, the comparison principle yields
\begin{equation*}
|w'(t,y)-w^r(t,y)|\leq \omega (r)\qquad \forall (t,y)\in (0,\infty)\times \re^3.
\end{equation*}
By the last inequality and~\eqref{stab4} we deduce that: for every $\eta>0$ and $R>1$, there exists $\tau>0$ such that
\begin{equation}\label{stab5}
|w^r(t,y)-\overline g(\bar x)|\leq \eta+\omega (r) \qquad \forall (t,y)\in (\tau,\infty)\times B_R(0).
\end{equation}

For later use, we introduce some notations; we set $Q^r:=(T-r,T)\times B_r(\bar x)$ and let $M\in \re$ be sufficiently large that  for every $(t,x,y)\in(0,T)\times B_r(\bar x)\times \re^3$: 
\begin{eqnarray}
\label{stabM}
\|w'\|_\infty\leq M\\
 \|w^r\|_\infty\leq M\\
|V^\epsilon(t,x,y)|\leq M.
\end{eqnarray}
Consider also a smooth function $\psi_0=\psi_0(x)$ (namely, it is independent of $t$ and of $y$) such that
\begin{equation}\label{stab7}
\left\{\begin{array}{ll}\psi_0(\bar x)=\ds 0,\\
\psi_0(x) \geq 0\quad \forall x\in  B_r(\bar x),\\
\ds\psi_0(x)\geq M-\inf_{(z,y)\in B_r(\bar x)\times \re^3}g(z,y) \quad \forall x\in \partial B_r(\bar x).
\end{array}\right.
\end{equation}
Let $C>0$ be a constant such that
\begin{equation}
\label{stabC}
|H(x,y,D_x\psi_0(x), D_{xx}^2\psi_0(x),0)|\leq C\qquad \forall (x,y)\in B_r(\bar x)\times \re^3.
\end{equation}

For $(t,x,y)\in Q^r\times \re^3$, we define
\begin{equation*}
\psi^\epsilon (t,x,y):=w^r\left(\frac{T-t}{\epsilon},y\right)+\psi_0(x) +C_1(T-t)
\end{equation*}
with $C_1:=C+aM$.
We claim that the function~$\psi^\epsilon$ is a supersolution to the following initial-boundary value problem
\begin{equation}\label{stab6}
\left\{\begin{array}{ll}
(i)\quad-\partial_t \Psi +H(x,y,D_x\Psi, D_{xx}^2\Psi, D_{xy}^2\Psi/\sqrt{\epsilon})-\frac{1}{\epsilon} {\cal L}(y,D_y \Psi, D_{yy}^2\Psi)&\\
\qquad\qquad\qquad +a\Psi=0\qquad \textrm{in }Q^r\times \re^3&\\
(ii)\quad\Psi (T,x,y)=g(x,y) \qquad\textrm{on } B_r(\bar x)\times \re^3&\\
(iii)\quad \Psi(t,x,y)=M \qquad\textrm{on }(T-r,T)\times\partial B_r(\bar x)\times \re^3.&
\end{array}\right.
\end{equation}
Assume for the moment that this claim is true. On the other hand, the function~$V^\epsilon$ is a subsolution to problem~\eqref{stab6}; therefore, by comparison principle (see, for instance, \cite[Proposition 1 (proof)]{AB3}), we get
\begin{equation*}
V^\epsilon(t,x,y)\leq \psi^\epsilon(t,x,y)\qquad \forall (t,x,y)\in (T-r,T)\times B_r(\bar x)\times \re^3.
\end{equation*}
For $y\in B_{R/2}(0)$  ($R$ is defined in \eqref{stab5}) and $t\in(T-r,T)$, we get
\begin{eqnarray*}
\overline V(t,x,y)&=&\limsup_{\epsilon\to 0^+, t'\to t, x'\to x, y'\to y}V^{\epsilon}(t',x',y')\\
&\leq &\limsup_{\epsilon\to 0^+, t'\to t, x'\to x, y'\to y}\psi^{\epsilon}(t',x',y')\\
&\leq &\limsup_{\epsilon\to 0^+, t'\to t, y'\to y}w^r\left(\frac{T-t'}{\epsilon},y'\right) +
\limsup_{t'\to t, x'\to x}[\psi_0(x')+C_1(T-t')]\\
&\leq &\bar g(\bar x)+\eta+\omega(r) + \psi_0(x)+C_1(T-t)
\end{eqnarray*}
where the last inequality is due to relation~\eqref{stab5} (observe that definitely $y'\in B_R(0)$ and $\frac{T-t'}{\epsilon}>\tau)$ and to the continuity of~$\psi_0$.
Since $\overline V$ is independent of~$y$, we deduce
$$
\overline V(t,x)\leq \overline g(\bar x)+\eta+\omega(r) + \psi_0(x)+C_1(T-t).
$$
Passing to the $\ds\limsup_{t'\to T^-, x'\to \bar x, y'\to y}$ (recall $\psi_0(\bar x)=0$), we infer
$$
\overline V(T,\bar x)\leq \overline g(\bar x)+\eta+\omega(r).
$$
By the arbitrariness of $\eta$ and of $r$, we get
$$
\overline V(T,\bar x)\leq \overline g(\bar x)
$$
which is equivalent to our statement.

Let us now pass to prove the claim: $\psi^\epsilon$ is a supersolution to problem~\eqref{stab6}.
First we check the initial-boundary conditions $(ii)$ and $(iii)$.
In order to prove $(ii)$, it suffices to note that the definition of~$g^r$ and the second property in~\eqref{stab7} entail
$$
\psi^\epsilon(T,x,y)=w^r(0,y)+\psi_0(x)\geq w^r(0,y)\geq g(x,y) \qquad \forall (x,y)\in B_r(\bar x)\times \re^3.
$$
In order to prove $(iii)$, we observe that $\ds g^r(y)\geq \inf_{(x,z)\in B_r(\bar x)\times \re^3}g(x,z)$; hence, the comparison principle for \eqref{stab1} yields
$$
w^r(t,y)\geq \inf_{(x,y)\in B_r(\bar x)\times \re^3}g(x,y) \qquad \forall (t,y)\in(0,\infty)\times \re^3.
$$
Taking also into account the third property in~\eqref{stab7}, we conclude
$$
\psi^\epsilon(t,x,y)=w^r(\frac{T-t}{\epsilon},y)+\psi_0(x)+C_1(T-t)\geq M
$$
for every $(t,x,y)\in (T-r,T)\times\partial B_r(\bar x)\times \re^3$.\\
Now we prove (i). Let us assume that $w^r$ is a classical solution to~\eqref{stab1}. 
In this case, in $(T-r,T)\times B_r(\bar x)\times \re^3$ there holds
\begin{eqnarray*}
&&-\partial_t \psi^\epsilon +H(x,y,D_x\psi^\epsilon, D_{xx}^2\psi^\epsilon, D_{xy}^2\psi^\epsilon/\sqrt{\epsilon})-\frac{1}{\epsilon} {\cal L}(y,D_y \psi^\epsilon, D_{yy}^2\psi^\epsilon) +a\psi^\epsilon\\
&&=\frac{1}{\epsilon}\left[\partial_t w^r-{\cal L}(y,D_y w^r, D_{yy}^2w^r)\right]+C_1+H(x,y,D_x\psi_0, D_{xx}^2\psi_0, 0) +a\psi^\epsilon\\
&&=C_1+H(x,y,D_x\psi_0, D_{xx}^2\psi_0, 0) +a w^r(\frac{T-t}{\epsilon},y)+a\psi_0(x)+aC_1(T-t)\\
&&=C_1+a w^r(\frac{T-t}{\epsilon},y) +H(x,y,D_x\psi_0, D_{xx}^2\psi_0, 0) \\
&&\geq C_1-C-aM\\
&&\geq 0.
\end{eqnarray*}
where we used the definition of $C_1:=C+aM$, the definition of $C$ in \eqref{stabC} and the definition of $M$ \eqref{stabM}.\\
In the case when $w^r$ is only a viscosity solution to~\eqref{stab1}, we can accomplish the proof following the same arguments of~\cite[Theorem 3]{AB3}.
\end{proof}

\subsection{Step 4}
We proved that $\overline V$ and $\underline V$ are respectively a subsolution and a supersolution of \eqref{EFFSP}.
WE can apply the comparison principle to equation \eqref{EFFSP} that holds since $\overline H$ defines a degenerate elliptic equation and $H$ is Lipschitz continuous on $x$ (see \eqref{regH1}).
Then $\overline V(t,x)\leq \underline V(t,x)$; by definition the reverse inequality is obvious.
Then $V=\overline V(t,x)=\underline V(t,x)$ is the unique continuous solution of the parabolic equation \eqref{EFFSP} and the local uniform convergence follows from standards arguments.

\vskip1cm

\noindent{\sc Acknowledgments } \\
We thank Martino Bardi, Marco Bramanti and Olivier Ley for helpful discussions and suggestions.\\
The first and the second authors  are members of the INDAM-Gnampa and are partially supported by the research project of the University of Padova "Mean-Field Games and Nonlinear PDEs" and by the Fondazione CaRiPaRo Project "Nonlinear Partial Differential Equations: Asymptotic Problems and Mean-Field Games". 
The third author has been partially funded by the ANR project ANR-16-CE40-0015-01.

\vskip 0.3truecm

\noindent {\bf Address of the authors}\\
Paola Mannucci,\\
Dipartimento di Matematica "Tullio Levi-Civita",
Universit\`a degli Studi di Padova,
Via Trieste 63, 35131, Padova, Italy,\\
Claudio Marchi,\\
Dipartimento di Ingegneria dell'Informazione,
Universit\`a degli Studi di Padova,
Via Gradenigo 6/b, 35131, Padova, Italy,\\
Nicoletta Tchou, IRMAR, \\
Universit\'e de Rennes 1,
Campus de Beaulieu, 35042 Rennes Cedex, France \\
mannucci@math.unipd.it,\\ 
claudio.marchi@unipd.it,\\
nicoletta.tchou@univ-rennes1.fr

\end{document}